\pgfplotsset{compat=1.18}
\tikzset{every picture/.style={font=\normalsize}}
\theoremstyle{plain}
\newtheorem{theorem}{Theorem}[section]
\newtheorem{corollary}[theorem]{Corollary}
\newtheorem{lemma}[theorem]{Lemma}
\newtheorem{proposition}[theorem]{Proposition}
\theoremstyle{definition}
\newtheorem{example}[theorem]{Example}
\newtheorem{problem}[theorem]{Problem}
\theoremstyle{remark}
\newtheorem{remark}[theorem]{Remark}
\newcommand{\N}{\mathbb{N}}
\newcommand{\R}{\mathbb{R}}
\newcommand{\C}{\mathbb{C}}
\newcommand{\Hyp}{\mathbf{H}}
\newcommand{\Sph}{\mathbf{S}}
\newcommand{\Ball}{\mathbf{B}}
\newcommand{\Proj}{\mathbf{P}}
\DeclareMathOperator{\Isom}{Isom}
\DeclareMathOperator{\GO}{O}
\DeclareMathOperator{\Span}{Span}
\DeclareMathOperator{\Conv}{Conv}
\DeclareMathOperator{\sign}{sign}
\title{Hyperbolic Simplices of Maximal Inradius}
\author{Bruno DUCHESNE, Christopher-Lloyd SIMON}
\date{\today}
\begin{document}
	
	\maketitle
	
	\begin{abstract}
		For $n\in \mathbb{N}$, consider a hyperbolic $n$-dimensional simplex $\Delta$, defined by $1+n$ points in the compactified hyperbolic space $\mathbf{H}^n \sqcup \partial \mathbf{H}^n$. For each integer $m\le n$, denote $\delta^n_m(\Delta)\in [0,+\infty]$ the Hausdorff distance between its skeleta of dimensions $n$ and $m$. In particular, $\delta^n_{n-1}(\Delta)$ is its inradius. The maximum of $\delta^n_m(\Delta)$ over $\Delta\in (\mathbf{H}^n \sqcup \partial \mathbf{H}^n)^{1+n}$ is denoted $\mu^n_m\in [0,+\infty]$.
		
		We first show that $\Delta$ has maximal inradius $\delta^n_{n-1}(\Delta)=\mu^n_m$ if and only if its is (total) ideal and regular; for which the inradius is given by $\tanh \mu^n_{n-1} = 1/n$. We deduce that $\Delta$ has maximal $\delta^n_{n-1}(\Delta)=\mu^n_m$ if and only if it is (total) ideal and regular. We compute that the maximal distance to the $1$-skeleton $\mu^n_1$ is given by $\left(\tanh \mu^n_1\right)^2 = (n-1)/(2n)$ and deduce that those are uniformly bounded by $\lim_{n} \mu^n_1 = \log(1+\sqrt{2})$.
	\end{abstract}
	% AMR Class: 52B11, 52B15, 52A40
	
	\section{Introduction}
	
	For $n\in \N_{\ge 2}$, let $\Hyp^n \sqcup \partial \Hyp^n$ be the compactified real algebraic hyperbolic space of dimension $n$.
	We will recall its definition and main properties in Section \ref{sec:geometry-Hn}.
	
	Here, a \emph{simplex} $\Delta$ is defined by a sequence of $1+n$ points in $\Hyp^n \sqcup \partial \Hyp^n$ called its \emph{vertices}.
	It is \emph{total} when its vertices are independent.
	It is \emph{ideal} when all its vertices belong to $\partial \Hyp^n$.
	
	For $m\in \{0,\dots, n\}$, a $m$-face of $\Delta$ is the convex hull of $1+m$ vertices. 
	Its $m$-skeleton $\Delta^{(m)}$ is the union of its $m$-faces.
	For $n>m>0$, we study the Hausdorff distance from $\Delta^{(n)}$ to $\Delta^{(m)}$:
	\(\delta^n_m(\Delta) = \max\{d(p, \Delta^{(m)}) \colon p \in \Delta^{(n)} \}\).
	In particular $\delta^n_{n-1}(\Delta)$ is the \emph{inradius} of $\Delta$ (\Cref{subsec:incenter}).
	
	%\begin{comment}
	\begin{figure}[h]
		\centering
		\includegraphics[width=0.4231\linewidth]{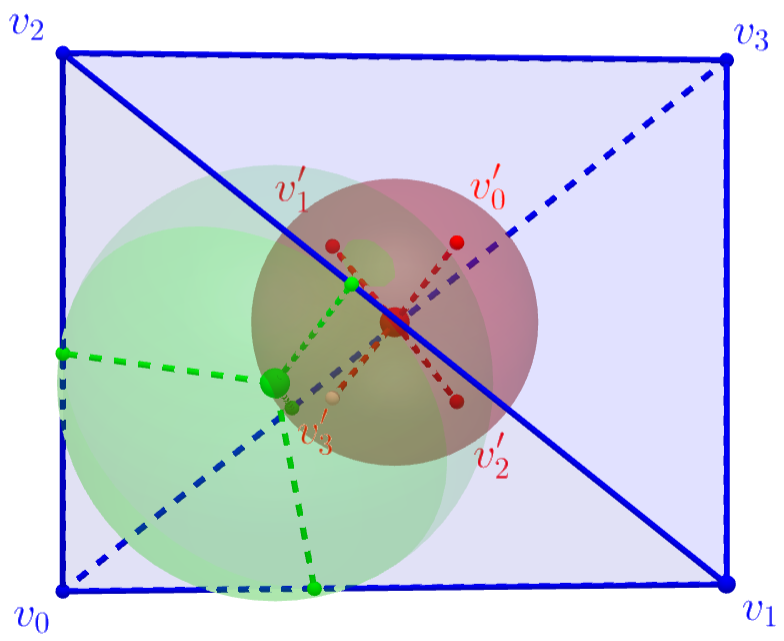}
		\includegraphics[width=0.4231\linewidth]{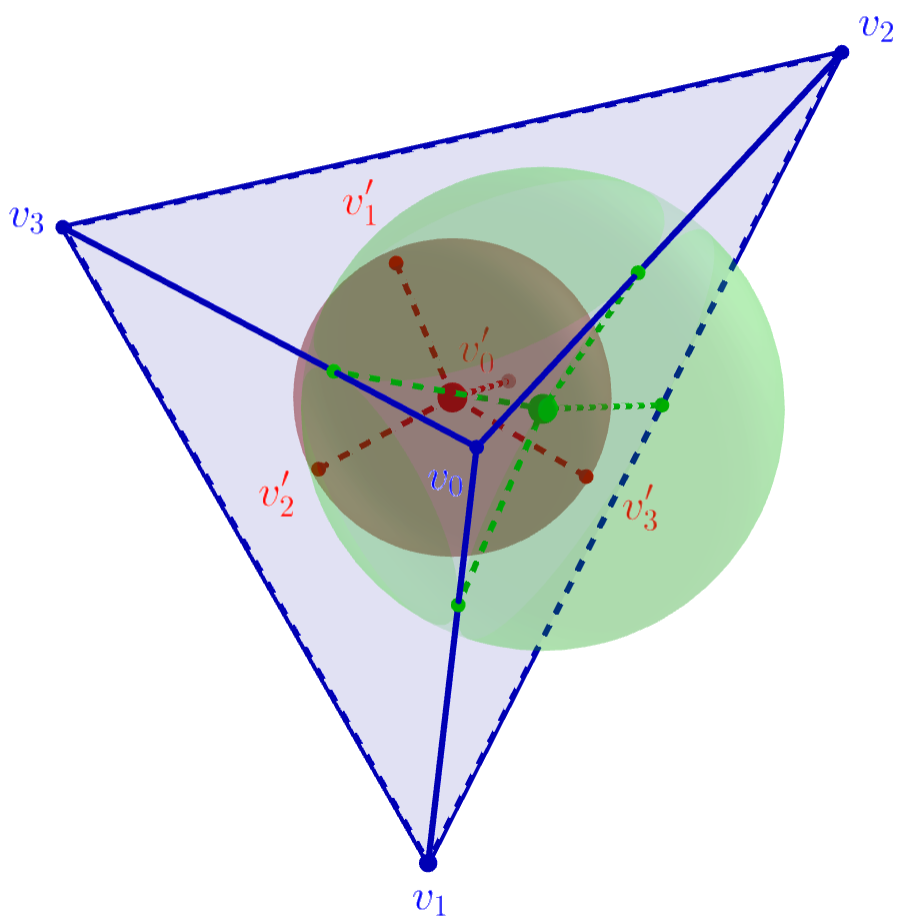} 
		\caption{Visualizing \textcolor{green!50!black}{$\delta^3_2$} and \textcolor{red}{$\delta^3_1$} in the incentred Euclidean model for an ideal simplex of $\Hyp^3$.}
		\label{fig:H3-ideal-simplex}
	\end{figure}
	
	\Cref{subsec:incenter} introduces the incentred Euclidean model of a hyperbolic simplex, establishing a correspondence between ideal simplices and Euclidean simplices inscribed in the unit sphere whose incenter lies at the origin, which is equivariant under $\Isom(\Hyp^n)$ and $ \Isom(\Sph^{n-1})$.
	
	A total simplex of $\Hyp^n\sqcup \partial \Hyp^n$ is \emph{regular} when its stabilizer in $\Isom(\Hyp^n)$ is isomorphic to the symmetric group $\mathfrak{S}_{n+1}$ of its vertices.
	There exists a total ideal simplex of $\Hyp^n\sqcup \partial \Hyp^n$ which is regular, it is unique up to the action of $\Isom(\Hyp^n)$.
	
	\begin{theorem}[maximal inradius]
		\label{thm_intro:ideal-simplices-max-inradius}
		A total simplex $\Delta$ of $\Hyp^n\sqcup \partial \Hyp^n$ has inradius $\delta^n_{n-1}(\Delta)\le \tanh^{-1}(1/n)$, with equality if and only if it is ideal and regular.
	\end{theorem}
	
	\begin{proof}[Proof outline]
		We work in the incentred Euclidean model, and use barycentric coordinates to show that Euclidean simplices with maximal inradius admit an orthocenter that coincides with the incenter.
		The regularity then follows from \cite[Theorem 4.3]{Edmonds-Allan-Martini_Orthocentric-simplices-centers_2005}.
	\end{proof}
	
	\begin{remark}[maximal inradius]
		We have no reference questioning or probing the characterization of hyperbolic simplices with maximal inradius, even though the inradius is closely studied or extensively used in several works \cite{Jacquemet_inradius-hyperbolic-simplex_2014, Peyrimohoff_simplices-minimal-edge-length-hyperbolic_2002}.
	\end{remark}
	
	\begin{remark}[maximal volume]
		The simplices of $\Hyp^n\sqcup \partial \Hyp^n$ with maximal volume are precisely those which are total ideal and regular.
		This was conjectured by Thurston and a complete proof was first given by \cite{Haagerup-Munkholm_hyperbolic-simplices-max-vol_1981} using analytic methods, then by \cite{Peyrimohoff_simplices-minimal-edge-length-hyperbolic_2002} using Steiner symmetrization.
		We wonder if our \Cref{cor:incentred-Euclidean-model} and the method in Theorem \ref{thm:ideal-simplices-max-inradius} could yield a simpler proof.
	\end{remark}
	
	For $n,m\in \N$, on the compact space $(\Hyp^n\sqcup\partial\Hyp^n)^{n+1}$ of hyperbolic simplices in dimension $n$, the function $\delta^n_m$ is continuous so it admits a maximum $\mu^n_m =\max_\Delta \delta^n_m(\Delta)\in [0,\infty]$, which must be achieved on a closed subset of total simplices which are ideal.
	%
	% A careful iteration of \Cref{thm_intro:ideal-simplices-max-inradius} with the hyperbolic Pythagorean Theorem will show the following.
	
	\begin{theorem}[maximal Hausdorff distance to $m$-skeleton]
		\label{thmIntro:Hausdist-simplex-skeleta}
		For integers $n>m > 0$, a simplex $\Delta$ in $\Hyp^n\sqcup \partial \Hyp^n$ satisfies $\delta^n_m(\Delta)= \mu^n_m$ if and only if $\Delta$ is total, ideal and regular.
		Moreover, $\mu^{n}_{1}$ is given 
		by \(\left(\tanh \mu^n_1\right)^2 = \tfrac{n-1}{2n}\) so $\mu^n_1$ grows as $n\to \infty$ to $\mu^\infty_1 = \tanh^{-1}(1/\sqrt{2}) = \log(1+\sqrt{2})$.
		% by \(\left(\sinh \mu^n_1\right)^2 = \tfrac{n-1}{n+1}\) so $\mu^n_1$ grows as $n\to \infty$ to $\mu^\infty_1 = \sinh^{-1}(1) = \log(1+\sqrt{2})$.
	\end{theorem}
	
	\begin{proof}[Proof outline]
		For a total ideal simplex $\Delta$, we consider the successive projections of $p\in \Delta^{(n-c)}$ on the closest faces of increasing codimension $c$: applying the hyperbolic Pythagorean theorem and the inequality in \Cref{thm_intro:ideal-simplices-max-inradius} shows that equality is achieved only for regular $\Delta$.
	\end{proof}
	
	\begin{remark}[bounding $\mu^n_1$]
		The finiteness of $\mu^n_1$ is mentioned in the proof of \cite[Theorem 3]{Bestvina_degenerations-hyperbolic-space_1988} with a reference to \cite{Bonahon-bouts-des-varietes-hyperboliques-de-dimension-3}, but our proof is complete and simple.
		
		In \cite[Theorem 3]{Bestvina_degenerations-hyperbolic-space_1988}, the finiteness of $\mu^n_1$ is used to show certain results about the convergence of group actions on $\Hyp^n$ for the equivariant Gromov-Hausdorff topology.
		We were interested (and surprised) by fact that $\mu^n_1$ is bounded by $\mu^\infty_1$, as this enabled us to extend these results in \cite{BD-CLS_actions-Hinfini_2026} and show Gromov-Hausdorff convergence for sequences of group actions on $\Hyp^{n_i}$ in a setting where $n_i\in \N\cup\{\infty\}$ is any sequence of countable cardinals.
	\end{remark}
	
	% We end with some cautionary examples of simplices displaying almost regular behaviour.
	
	% \begin{theorem}[constructing twisted simplices]
		%    If a hyperbolic total ideal simplex has regular faces of codimension $1$, then it is regular.
		%    In dimension $n\ge 5$, there exist total ideal simplices that are not regular but whose faces of codimension $2$ are all regular.
		%\end{theorem}
		
		%\renewcommand{\contentsname}{Plan of the paper}
		%\setcounter{tocdepth}{2}
		%\tableofcontents
		
		We finish with the following \Cref{prob:maximizers_delta-n-m} concerning the enumeration and localization of maximizers of $\delta^n_m(\Delta)$, and the \Cref{eg:disphenoids} of disphenoids.
		
		\begin{problem}[maximizers of $\delta^n_m$]
			Each $(n+1)$-set of $m$-faces $\{F_{0}^{(m)}, \dots, F_n^{(m)}\} \in \left(\genfrac{}{}{0pt}{}{\binom{\Delta}{1+m}}{1+n}\right)$
			defines a decreasing intersection of convex sets in $\Delta^{(n)}$ by $C_t(F)=\{p\in \Delta^{(n)} \colon d(p,F_i^{(m)}\ge t\}$, hence a point maximizing the distance to these faces, that is a local maximizer of the Hausdorff distance from $\Delta^{(n)}$ to $\Delta^{(m)}$, which for some $F$ will be a global maximer of $\delta^n_m$. 
			
			\emph{What can be the number of distinct local global minimizers and where can they be located?}
			
			Note that the isometry group of the simplex $\operatorname{Stab}(\Delta^{(n)})\subset \Isom(\Hyp^n)$ acts on that set of $(1+n)$-sets of $m$-faces.
			However the description and enumeration of distinct (local or global) maximizers is more subtle than the question of classifying (all or certain) of the orbits under this action, since it may happen that two $(1+n)$-sets of $m$-faces yield the same (local or global) maximizer without belonging to the same orbit.
		\end{problem}
		
		\section{Geometry of the algebraic hyperbolic space}
		\label{sec:geometry-Hn}
		
		In this section, after recalling some geometry of the Lorentz-Minkowski space time, we define the compactified real algebraic hyperbolic space of dimension $n\in \N_{>0}$ denoted $\Hyp^n\sqcup \partial \Hyp^n$ by constructing three models, and use each of them to discuss some of its properties.
		
		% Let us mention in passing that the space $\Hyp^n$ can be characterized as the unique manifold of dimension $n$ with a Riemannian metric of constant sectional curvature $-1$ which is simply connected.
		
		The Minkowski hyperboloid model is suited to compute metric quantities (angles, distances and volumes) in $\Hyp^n$ using the algebra of the scalar product.
		The Cayley-Klein projective model is adapted for drawing and reasoning with incidence and orthogonality relations between points, lines and quadrics.
		We will use the Euclidean ball model to work around a distinguished point (the incenter of a simplex) as it provides a conformal Euclidean model of its tangent space.
		
		\subsection{Geometry of the Lorentz-Minkowski space-time}
		\label{subsec:Lorentz-Minkowski}
		Fix $n\in \N$. The Minkowski space of dimension $1+n$ is the oriented $\R$-vector space $\R^{1+n}$ with the symmetric bilinear form of signature $(1,n)$ defined for $x,y\in\R^{1+n}$ by \(\langle x,y\rangle =x_0v_0-\sum_{i>0}x_iv_i\).
		The \emph{Gram matrix} of a sequence $(w_0,\dots,w_l)\in (\R^{1+n})^l$ is defined by $G(u)=(\langle w_i \mid w_j\rangle)_{i,j}$. This is a real symmetric matrix of signature $(s_+,s_0,s_-)$ where $s_+\le 1$ and $s_-\le n$.
		An \emph{orthonormal frame} is a basis of $\R^{1+n}$ whose Gram matrix is $\operatorname{diag}(1,-1,\dots,-1)$.
		
		For $r\in \R$, let $\mathbf{X}_{r} =\{x\in\R^{1+n} \colon \langle x,x\rangle=r\}$ be the pseudo-sphere of radius $r$ and denote by $\mathbf{Y}_r=\{x\in \mathbf{X}_r \colon x_0\ge 0\}$ its intersection with the upper-half space.
		The isotropic cone $\mathbf{X}_{0}$ is asymptotic to the $2$-sheeted hyperboloid $\mathbf{X}_{+1}$ and to the $1$-sheeted hyperboloid $\mathbf{X}_{-1}$.
		In the projective space $\Proj{\R^{1+n}}$, the non-degenerate quadric $\Proj{\mathbf{X}_0}$ homeomorphic to the sphere $\Sph^{n-1}$ has inside $\Proj{\mathbf{X}_{+1}}$ homeomorphic to the open ball $\Ball^n$ and outside $\Proj{\mathbf{X}_{-1}}$ homeomorphic to $\Sph^{n-1}\times \R$.
		
		\begin{figure}[h]
			\centering
			\includegraphics[width=0.48\linewidth]{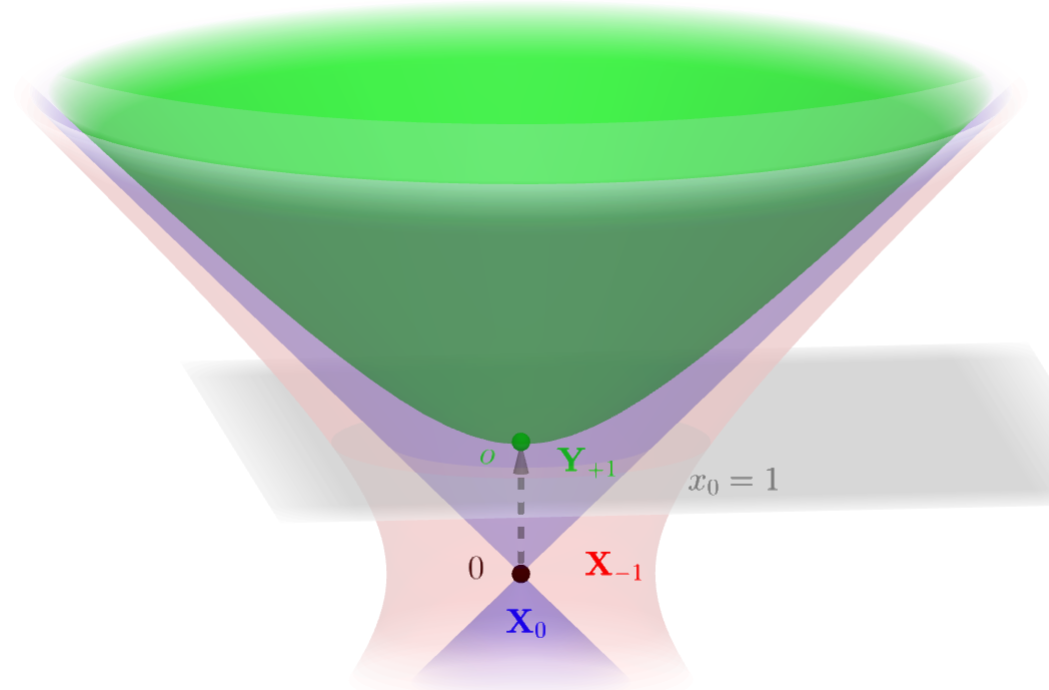}
			\hfill
			\includegraphics[width=0.38\linewidth]{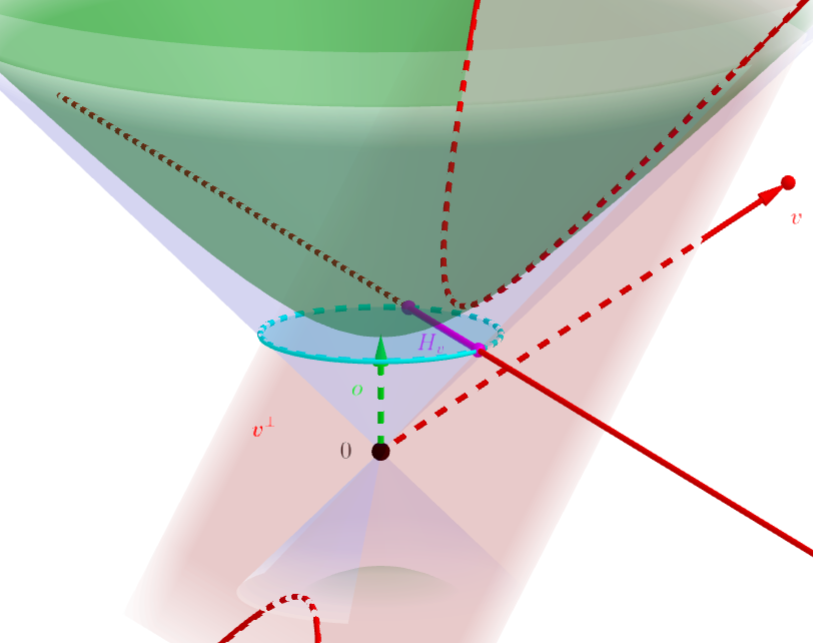}
			\caption{In Minkowski space $\R^{1+2}$: (upper-half) quadric surfaces $\mathbf{Y}_r \subset \mathbf{X}_r$ for $r=\{-1,0,1\}$.\\
				The orthogonality relation projectivizes to polarity with respect to the quadric $\Proj{\mathbf{X}_0}$.}
			\label{fig:quadric-surfaces-polarity}
		\end{figure}
		
		The group $\GO(1,n)$ acts transitively on $\mathbf{X}_{+1}$, and the connected component of the identity $\GO^+(1,n)$ acts transitively on $\mathbf{Y}_{+1}$.
		The stabiliser of the point $o=(+1,0,\dots,0)$ is the maximal compact connected subgroup $\GO(n)$.
		More generally, $\GO(1,n)$ acts transitively on bases of $\R^{1+n}$ with a given Gram matrix: the unique linear matrix sending one basis to the other must preserve the quadratic form. 
		It follows that $\GO(1,n)$ acts transitively on sequences of $l\in \N$ vectors in $\R^{1+n}$ with a given Gram matrix.
		% Note that given a sequence of vectors $(v_0,\dots,v_l)$ in $\R^{1+n}$, the Gram-Schmidt orthogonalisation algorithm fails on the first $k$ such that the cone $\sum_0^k \R_+ v_k$ intersects $\mathbf{Y}_0$, however it can be adapted when they all belong to $\mathbf{Y}_{>0}$.
		
		For $u\in \R^{1+n}\setminus\{0\}$, its orthogonal hyperplane denoted $u^\perp = \{x\in \R^{1+n} \colon \langle u\mid x\rangle =0\}$ is naturally co-oriented as the boundary of the closed half-space $u^{+} = \{x\in \R^{n+1} \colon \langle u\mid x\rangle \ge 0\}$.
		Since $\langle\mid \rangle$ is non-degenerate, the orthogonality $u \mapsto u^\perp$ projectifies to an isomomorphism between the projective space $\Proj{(\R^{1+n})}$ and its dual $\Proj{(\R^{1+n})}^*$, which is double covered by the isomorphism induced by $u\mapsto u^+$ between the Grassmanians of oriented lines $\Proj^+{(\R^{1+n})}$ and of co-oriented hyperplanes $\Proj^+{(\R^{1+n})}^*$.
		This isomorphism called \emph{polarity} with respect to $\Proj{\mathbf{X}_0}$ or $\Proj^+{\mathbf{X}_0}$ can be constructed by straight lines and incidence relations between point, lines and the quadrics.
		
		In $(\R^{1+n},\langle \mid \rangle)$, a basis $(v_i)$ has a unique dual basis $(v_j^\star)$ defined by $(\langle v_i^\star \mid v_j\rangle)_{ij} = \operatorname{Id}_{1+n}$.
		The decomposition of $p\in \R^{1+n}$ as $p= \sum_0^n \lambda_k v_k$ has coordinates $\lambda_k = \langle p \mid v_k^\star \rangle$ also given by:
		\begin{equation*}
			\lambda_k = \frac{\det G(v_0,\dots, v_{k-1}, p, v_{k+1}, \dots, v_n)}{\det G(v_0,\dots, v_{k-1}, v_k, v_{k+1}, \dots, v_n)} 
		\end{equation*}
		Indeed, $\lambda_k$ is the unique linear form on $\R^{1+n}$ which vanishes on $\{v_0,\dots,v_n\}\setminus\{v_k\}$ and takes the value $1$ at $v_k$, and the same goes for the scalar product with $v_k^\star$ and that ratio of determinants.
		
		\subsection{The Minkowski hyperboloid model}
		
		The \emph{Minkowski hyperboloid model}
		of $\Hyp^n$ is given by endowing $\mathbf{Y}_{+1}$ with the Riemannian metric obtained by restricting the form $-\langle \mid \rangle$ to its tangent spaces.
		For example, the tangent space at $o=\mathbf{Y}_{+1}\cap\{x\in \R^{1+n} \colon x_0=1\}$ is the horizontal hyperplane $\R^n$ with Euclidean metric $-\langle \mid \rangle$.
		
		For $k\in \{1,\dots, n\}$, the geodesic subspaces of dimension $k$ in $\Hyp^n=\mathbf{Y}_{+1}$ are its (non-empty) intersections with the linear spaces of dimension $1+k$ in $\R^{1+n}$ (to which the restriction of the form $\langle \mid \rangle$ has signature $(k,1)$).
		Hence the notions of linear rank, span and independence in $\R^{1+n}$ restrict to the corresponding notions of geodesic rank, span and independence in $\Hyp^n$.
		
		For $v_0,\dots,v_m\in \mathbf{Y}_{+1}$, their geodesic convex hull is obtained by intersecting $\mathbf{Y}_{+1}$ with the positive cone over their linear convex hull $\R_+\cdot \Conv(v_0,\dots,v_m)$, or equivalently (by convexity of $\mathbf{Y}_{+1}$) with their convex hull with the origin $\Conv(0,v_0,\dots,v_m)$.
		This is a hyperbolic polytope whose geodesic boundary consists of portions of the hyperboloid.
		
		The \emph{Lorentzian sphere model} for $\partial \Hyp^n$ namely the sphere of asymptotic directions of the hyperbolic space can be represented by any section of the projectivization map $\mathbf{Y}_0\to \Proj{\mathbf{Y}_0}$, such as $\mathbf{Y}_0\cap \{x\in \R^{1+n} \colon x_0=+1\}$. 
		(The compactification will appear in the next \Cref{subsec:Cayley-Klein}.)
		
		We define $(\Hyp^n)^\star$ as the set of hyperspaces (geodesic subspaces of codimension $1$), and $(\Hyp^n)^+$ as the set of closed half-spaces.
		For $u\in \mathbf{X}_{<0}$ of negative norm, we denote the geodesic hyperspace $H_u=u^{\perp}\cap \mathbf{Y}_{+1}$ and the closed half-space $H_u^+=u^{+}\cap \mathbf{Y}_{+1}$.
		In restriction to $u\in \mathbf{X}_{-1}$, the maps $u\mapsto H_u$ and $u\mapsto H_u^+$ are two-to-one and one-to-one parametrizations of $(\Hyp^n)^\star$ and $(\Hyp^n)^+$.
		
		We now use this Minkowski hyperboloid model to express the distances and angles between points and hyperspaces of $\Hyp^n$ using the algebra of the scalar product.
		
		For $x,y\in \mathbf{Y}_{+1}$, their distance is obtained by integrating the Riemannian metric along the segment of hyperbola $\R_+\cdot\Conv(x,y)\cap \mathbf{Y}_{+1}$ yielding \(\cosh(d(x,y))= \langle x \mid y\rangle\).
		
		For distinct $x,y\in \mathbf{Y}_{+1}$, the difference $u=x-y$ has negative norm and the hyperspace $H_{u}=u^\perp \cap \mathbf{Y}_{+1}$ consists of the set of points equidistant to $x$ and $y$.
		For $x\in \mathbf{Y}_{+1}$ and $u\in \mathbf{X}_{-1}$ the distance from $x$ to $H_u$ is given (in \cite[§3.2]{Ratcliffe_Foundations-Hyperbolic-Manifolds_2019}) by \(\sinh d(x, H_u) = \lvert \langle x, u \rangle\rvert\) and the $\operatorname{sign}(\langle u\mid x \rangle) \in \{-1,0,+1\}$ tells respectively whether $x$ lies in $H_u^+\setminus H_u$ or $H_u$ or $\Hyp^n\setminus H_u^+$.
		For non-proportional $u,v\in \mathbf{X}_{-1}$, the relative positions of the hyperspaces $H_u$ and $H_v$ in $\Hyp^n\sqcup \partial \Hyp^n$ is given (in \cite[§3.2]{Ratcliffe_Foundations-Hyperbolic-Manifolds_2019}) by the $\operatorname{sign}(1-\langle u, v \rangle^2)\in \{+1,0,-1\}$ according to:
		\begin{enumerate}[noitemsep, align=left]
			\item[$\lvert \langle u, v \rangle \rvert < 1$:] $H_u$ and $H_v$ intersect in $\Hyp^n$ at a dihedral angle given by
			\(\cos \angle(u^\perp, v^\perp) = \langle u, v \rangle\);
			% furthermore both the sum and difference $w_\pm=(u\pm v)$ have negative norm and the hyperspaces $H_{w_\pm}=w_\pm ^\perp \cap \mathbf{Y}_{+1}$ are the two bisectors of $H_u,H_v$ at their intersection.
			
			\item[$\lvert \langle u, v \rangle \rvert = 1$:] $H_u$ and $H_v$ are disjoint but have exactly one common asymptotic direction in $\partial \Hyp^n$; 
			% furthermore either the sum or the difference $w_\pm=(u\pm v)$ has negative norm, and the hyperspace $H_{w_\pm}=w_\pm ^\perp \cap \mathbf{Y}_{+1}$ is the equidistant hyperspace to $H_u,H_v$.
			
			\item[$\lvert \langle u, v \rangle \rvert > 1$:] $H_u$ and $H_v$ are separated in $\Hyp^n$ by a positive distance, equal to the length of the unique geodesic arc orthogonal to both $H_u$ and $H_v$, given by \(\cosh d(H_u, H_v) = \lvert \langle u \mid v \rangle\rvert\) . %, and this arc receives orientations from the co-orientations of $u^\perp,v^\perp$, which coincide when \(\langle u \mid v \rangle \ge 0\) and are opposite when \(\langle u \mid v \rangle \ge 0\).
		\end{enumerate}
		In each case, the $\operatorname{sign}(\langle u\mid v \rangle) \in \{-1,+1\}$ determines the relative position of the half-spaces $H_u^+$ and $H_u^-$.
		The difference $w=(u-\sign(\langle u\mid v\rangle)v)$ has negative norm and $H_{w}=w^\perp \cap \mathbf{Y}_{+1}$ is a geodesic hyperspace of points equidistant to $H_w$ and $H_v$; more precisely we have respectively:
		\begin{enumerate}[noitemsep, align=left]
			\item[$\lvert \langle u, v \rangle \rvert > 1$:] $H_{u-v}$ and $H_{u+v}$ are the two bisectors of $H_u$ and $H_v$ at their intersection
			\item[$\lvert \langle u, v \rangle \rvert = 1$:] $H_{u-v}$ is equidistant to $H_u$ and $H_v$ (so it shares their common asymptotic direction)
			\item[$\lvert \langle u, v \rangle \rvert < 1$:] $H_{u-v}$ is orthogonal to the orthogeodesic of $H_u$ and $H_v$ at its midpoint
		\end{enumerate}
		
		\begin{figure}[h]
			\centering
			\includestandalone{images/tikz/config-hyperplanes-positive}
			\includestandalone{images/tikz/config-hyperplanes-null}
			\includestandalone{images/tikz/config-hyperplanes-negative}
			\caption{Configurations of hyperplanes in $\Hyp^2$ and their bissectors (see \cref{fig:cross-ratio_polar-incidence_orthoproj-bissect}).}
			\label{fig:config-hyperplanes-Hn}
		\end{figure}
		
		In this model, the group $\Isom(\mathbf{Y}_{+1})$ consists of the two connected components of $\GO(1,n)$ preserving the sign of the first coordinate: it acts transitively on the orthogonal frame bundle of $\mathbf{Y}_{+1}$, hence any metric property about an orthonormal frame at a point holds for all such.
		
		\subsection{The Cayley-Klein projective model}
		\label{subsec:Cayley-Klein}
		The \emph{Cayley-Klein projective model} for $\Hyp^n$ is the subset of lines inside the cone $\Proj\mathbf{X}_{> 0}$.
		The boundary $\partial \Hyp^n$ is the projectivised isotropic cone $\Proj{\mathbf{X}_{0}}$.
		Their union $\Proj{(\mathbf{X}_{\ge 0})}$ defines the topology on the compactified hyperbolic space $\Hyp^n\sqcup \partial \Hyp^n$.
		The space of geodesics hyperspaces $(\partial \Hyp^n)^\star$ corresponds by polarity with respect to the quadric to the points outside of the quadric $\Proj{\mathbf{X}_{-1}}$.
		Hence every point of $\Proj(\R^{1+n})=\Proj{(\mathbf{X}_{+1})} \sqcup \Proj{(\mathbf{X}_{-1})} \sqcup \Proj{(\mathbf{X}_0)}$ has a geometric interpretation and this defines the topology (and geometry) on the extended hyperbolic plane $(\Hyp^n)\sqcup (\partial\Hyp^n) \sqcup (\Hyp^n)^\star$.
		
		The projectivization $\mathbf{X}_{> 0}\to \Proj{\mathbf{X}_{+1}}$ is a homeomorphism in restriction to $\mathbf{Y}_{+1}$, so imposing it to be an isometry leads to defining the distance distance between $[x],[y]\in \Proj{\mathbf{X}_{+1}}$ as the hyperbolic angle:
		\(\cosh d([x],[y])=\langle x,y\rangle/\sqrt{\langle x,x\rangle\langle y,y\rangle}\).
		We may also express this distance in purely projective terms using the cross-ratio of an aligned quadruple of distinct points $(x',x,y,y')$ given in any affine chart containing them by $[x',x,y,y'] = \frac{(x'-y)(x-y')}{(x'-x)(y-y')}$.
		For distinct points $x,y\in \Proj{\mathbf{X}_{+1}}$, the line $(xy)$ intersects the quadric $\Proj\mathbf{X}_0$ in two other points $\{x',y'\}$ such that $(x',x,y,y')$ lie on this order on the line $(xy)$ and we have $\exp 2d(x,y)=\lvert [x',x,y,y'] \rvert$.
		
		In this model, the isometry group $\Isom(\Hyp^n)$ is identified with $\Proj{\GO(1,n)}$.
		
		\begin{figure}[h]
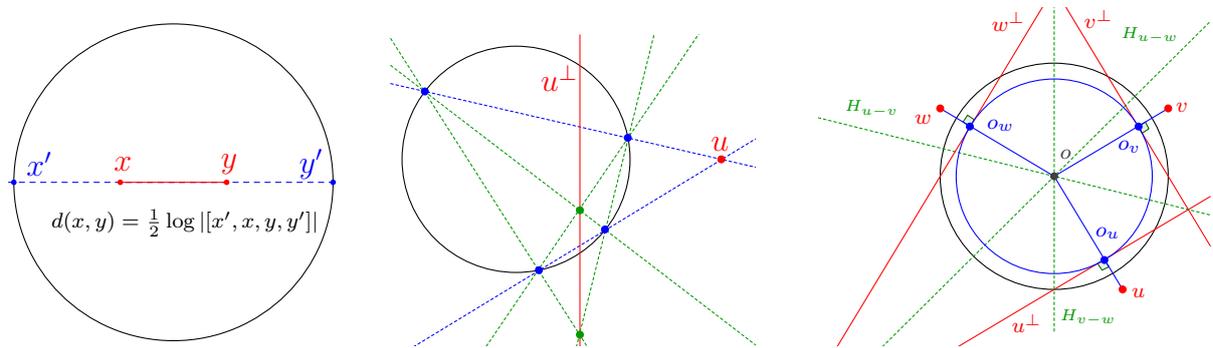

			\centering
			\includestandalone{images/tikz/distance-from-cross-ratio}
			\includestandalone{images/tikz/polar-from-incidence}
			\includestandalone{images/tikz/orthoproj-bissectors}
			\caption{Distance from cross-ratio. Polar from incidence. Orthogonal projections and bissectors.}
			\label{fig:cross-ratio_polar-incidence_orthoproj-bissect}
		\end{figure}
		
		The Cayley-Klein projective model of the extended $\Hyp^n \sqcup \partial \Hyp^n \sqcup (\Hyp^n)^\star$ is especially adapted to draw and reason with incidence and orthogonality relations (between points, lines and the quadric), through straight lines constructions (see Figures \ref{fig:config-hyperplanes-Hn} and \ref{fig:cross-ratio_polar-incidence_orthoproj-bissect}).
		One may also compare distances geometrically (but their analytic and algebraic manipulation are less amenable).
		
		Indeed, for $k\in \{1,\dots, n\}$, the geodesic subspaces of dimension $k$ in $\Hyp^n=\Proj{\mathbf{X}_{+1}}$ are the intersections with the projective linear spaces of dimension $k$ in $\Proj\R^{1+n}$.
		Hence the notions of projective linear rank, span and independence in $\Proj(\R^{1+n})$ restrict to the corresponding notions of geodesic rank, span and independence in $\Hyp^n\sqcup \partial \Hyp^n$.
		
		For $x\in \Proj(\mathbf{X}_{+1})$ and $u\in \Proj(\mathbf{X}_{-1})$, the orthogonal projection of the point $x\in \Hyp^n$ on the hyperspace $H_u\subset \Hyp^n$ is obtained as the intersection of the line $(xu)$ with the hyperspace $u^\perp$.
		For $u\in \Proj^+(\mathbf{X}_{-1})$, one may visualize the boundary $\partial H_u = u^\perp \cap \Proj(\mathbf{X}_0)$ as the set of tangency points between the quadric $\Proj(\mathbf{X}_0)$ and the lines (or hyperplanes) in the pencil based at $u$.
		The half-spaces $H_u^+\subset \Hyp^n$ hyperplanes correspond by duality to the signed-points outside the quadric $u\in \Proj^+(\mathbf{X}_{-1})=\Proj(\mathbf{X}_{-1})\times \{-1,+1\}$ (in bijection with $\mathbf{X}_{-1}$).
		
		\subsection{The Euclidean ball model}
		
		The \emph{Euclidean ball model} is obtained by endowing the affine chart of $\Proj{(\R^{1+n})}$ given by the horizontal affine hyperplane at unit height $o+\R^{n}=\{x\in \R^{1+n} \colon x_0=+1\}$ with the structure of a Euclidean plane with origin $o$ and scalar product $(x,y)\mapsto -\langle x-o \mid y-o \rangle$.
		This yields a homeomorphism between the quadric pair $(\Proj{\mathbf{X}_{0}}, \Proj{\mathbf{X}_{+1}})$ and $(\Sph^{n-1}, \Ball^{n})$, which is equivariant under the actions of $\GO(n)$ viewed as the stabiliser of $o$ in $\GO(n)\subset \Isom(\Hyp^n)$ and as $\Isom(\Ball^n)$.
		
		The points outside the closed ball $u \in (o+\R^n)\setminus (\Ball^{n}\sqcup \Sph^{n-1})$ correspond to the hyperspaces $H_u$ that do not pass through $o$.
		This model inherits all the straight-line constructions from the Cayley-Klein projective model which do not involve hyperspaces through $o$.
		
		For $k\in \{1,\dots, n\}$, the geodesic subspaces of dimension $k$ in $\Hyp^n=\Ball^n$ are the intersections with the linear spaces of dimension $k$ in $o+\R^{n}$.
		Hence the notions of projective linear rank, span and independence in $\R^{n}$ restrict to the corresponding notions of geodesic rank, span and independence in $\Hyp^n\sqcup \partial \Hyp^n$.

		Since $\Isom(\Hyp^n)$ acts freely transitively on the bundle of orthonormal frames to $\Hyp^n$, any metric property about an orthonormal frame holds for all such.
		The Euclidean ball model is well adapted to work in an orthonormal frame at $o$.
		
		In this model, the tangent space of $\Hyp^n=\Ball^n$ at the center $o$ with its induced inner product is isometric to that of $\R^n=\{x_0=0\}$ with its Euclidean inner product $-\langle \mid \rangle$.
		Hence about the center $o$, the hyperbolic angles are conformal to the Euclidean angles, and the hyperbolic sphere $\Sph_o(r)$ of radius $r\in [0,\infty]$ coincides with the Euclidean sphere of radius $\tanh(r)\in [0,1]$.
		
		\begin{figure}[h]
			\centering
			\includegraphics[width=0.28\linewidth]{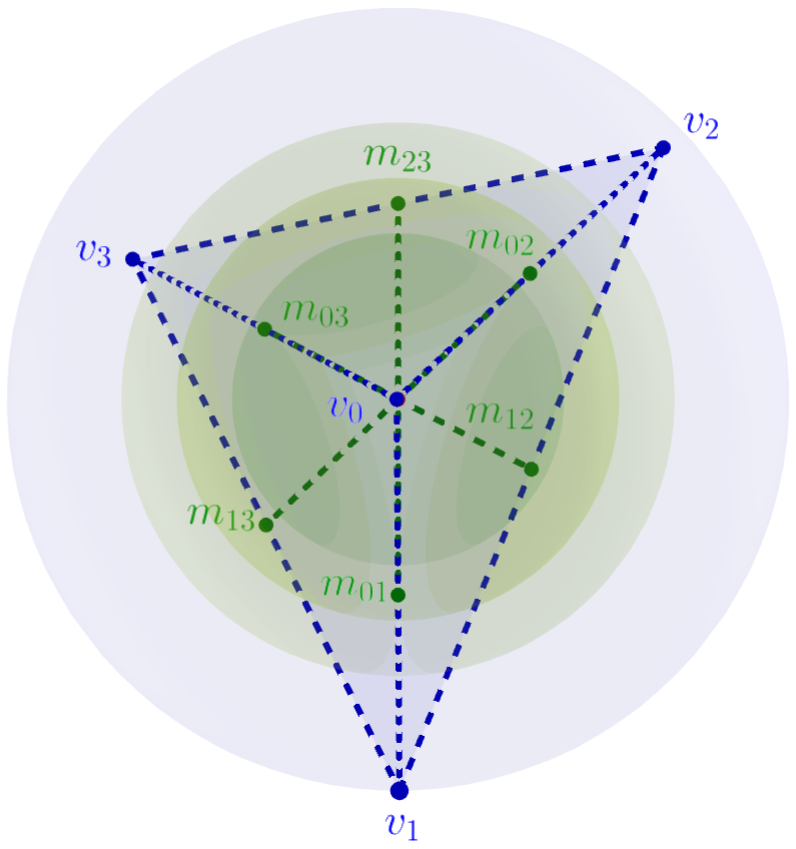}
			\includegraphics[width=0.3\linewidth]{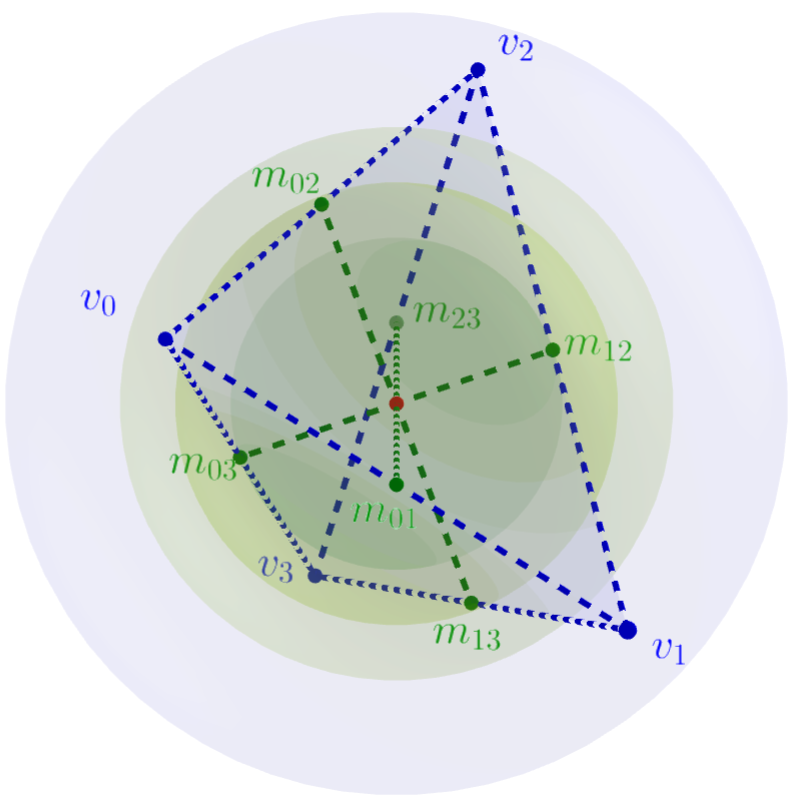}
			\includestandalone{images/tikz/radE_tanh-radH}
			\caption{In the Euclidean ball model, about the center $o$ the hyperbolic angles are conformal to the Euclidean angles, and the hyperbolic sphere $\Sph_o(r)$ is a Euclidean sphere of radius $\tanh(r)$.}
			\label{fig:re=tanh(rh)}
		\end{figure}
		
		At any $x\in \Hyp^n$, the geodesic map from its unit tangent sphere yields a homeomorphism with the boundary $\Sph_x^{n-1} \to \partial \Hyp^n$ which is $\Isom(\Hyp^n)$-equivariant: an isometry $f\colon\Hyp^n\to \Hyp^n$ extends to a projective map on the boundary $\partial f \colon \partial \Hyp^n \to \partial \Hyp^n$, and its differential $df_x\colon T_{x}\Hyp^n\to T_{f(x)}\Hyp^n$ restricts to a conformal map $df_x\colon \Sph_{o}^n\to \Sph_{f(x)}^n$: these fit into a commutative diagram.
		% Thus $\Isom(\Hyp^n)=\operatorname{Mob}(\Sph^{n-1})$.
		
		\section{Hyperbolic simplices}
		
		\subsection{Definition of simplices and related geometric constructions}
		\label{subsec:hyperbolic-simplices}
		
		\subsubsection*{Hyperbolic simplices and their skeleta}
		
		A \emph{simplex} $\Delta$ in $n$-dimensional hyperbolic space is an element of the compact $(\Hyp^n\sqcup \partial\Hyp^n)^{n+1}$, that is a sequence of $n+1$ points $v_0,\dots,v_n$ in $\Hyp^n \sqcup \partial \Hyp^n$ called its \emph{vertices}.
		The simplex is \emph{ideal} when all its vertices lie in the boundary, namely when it belongs to the compact $(\partial\Hyp^n)^{n+1}$.
		
		For $m\in \{0,\dots, n\}$, a \emph{$m$-face} of $\Delta$ is the convex hull of $1+m$ of its vertices. 
		Its \emph{$m$-skeleton} $\Delta^{(m)}$ is the union of its $m$-faces.
		The Hausdorff distance from $\Delta^{(n)}$ to the $m$-skeleton $\Delta^{(m)}$ is
		\begin{equation*}
			\delta^n_m(\Delta) = \max\{d(p, \Delta^{(m)}) \colon p \in \Delta^{(n)} \}.
		\end{equation*}
		
		The function $\delta^n_m \colon (\Hyp^n\sqcup \partial\Hyp^n)^{n+1} \to [0,\infty]$ is continuous so it admits a maximum $\mu^n_m =\max_\Delta \delta^n_m(\Delta)$ and this maximum must be achieved on a closed subset.
		We are interested in characterizing, for fixed $n>m>0$, the simplices which maximize $\mu^n_m$ and showing that this maximum is uniformly bounded.
		
		Since $\delta^n_m$ is increasing under inclusions of $n$-skeleta, and every simplex has its $n$-skeleton contained in that of an ideal simplex, the maximal loci of $\delta^n_m$ are contained in the subset of ideal simplices.
		(However, we do not yet restrict the discussion to ideal simplices.)
		
		A simplex is called \emph{total} when its vertices are independent, that is when its $n$-skeleton $\Delta^{(n)}$ has non-empty interior (or equivalently positive $n$-volume).
		The maximal locus of $\delta^n_m$ must be contained in the set of total simplices, so from now on we assume that $\Delta$ is total.
		
		\subsubsection*{Minkowski model and homogeneous coordinates}
		
		For $k\in \{0,\dots, n\}$, the $k$-th hyperface $\partial_k \Delta^{(n)}=\Conv(\{v_0,\dots,v_n\}\setminus\{v_k\})$ is co-oriented as the boundary of $\Delta^{(n)}$. 
		In the projective model $\Proj \R^{1+n}$, it spans a co-oriented hyperplane $\Span(\partial_k \Delta^{(n)})$ that is dual (by polarity with respect to the quadric $\Proj \mathbf{X}_0$) to a signed point $v_k^\star \in \Proj^+ \mathbf{X}_{-1}$.
		In particular $\Delta^{(n)}$ is the intersection of the half spaces $H_{v_k^\star}^+$.
		
		In the Minkowski model $(\R^{1+n},\langle \cdot \mid \cdot \rangle)$, there is a unique lift of the dual vertices $v_k^\star \in \mathbf{X}_{-1}$, hence a unique lift of the vertices $v_k \in \mathbf{Y}_{\ge 0}$ satisfying $\langle v_k\mid v_k^\star \rangle = 1$, so that the bases $(v_i)$ and $(v_j^\star)$ are dual.
		This defines the \emph{Minkowski model} of a simplex $\Delta$ and its dual $\Delta^\star$ in $\R^{1+n}$.
		
		The pair of dual bases $\Delta,\Delta^\star$ yields a coordinate system on $\R^{1+n}$ where $p= \sum_0^n \lambda_k v_k$ has coordinates $\lambda_k =\langle p \mid v_k^\star \rangle$ also given by the expression:
		\begin{equation*}
			\lambda_k = \frac{\det G(v_0,\dots, v_{k-1}, p, v_{k+1}, \dots, v_n)}{\det G(v_0,\dots, v_{k-1}, v_k, v_{k+1}, \dots, v_n)}.
		\end{equation*}
		This yields our homogeneous coordinate system for $p\in \Hyp^n\sqcup \partial \Hyp^n=\Proj \mathbf{Y}_{\ge 0}$.
		
		\subsubsection*{Orthogonal projections}
		
		For $p\in \Hyp^n$, its orthogonal projection on the hyperspace $H_{v_k^\star}$ can be constructed in the projective model as the intersection $p_k= [pv_k^\star)\cap \Span(\partial_k \Delta^{(n)})$, so if $p=\sum \lambda_i v_i$ then $p_k = \sum_{i\ne k} \lambda_i v_i$.
		The distance is given by $\sinh d(p,p_k)= \rvert \langle p,v_k^\star\rangle\lvert$.
		
		Beware that for some simplices, there are points $p\in \Delta^{(n)}$ whose orthogonal projection on a hyperspace $\Span(\partial_k\Delta^{(n)})$ lies outside the hyperface $\partial_k\Delta^{(n)}$, and one may construct such simplices that are total and ideal as soon as $n\ge 3$ (see Figure \ref{fig:orthogonal projections}).
		However, for any $p\in \Delta^{(n)}$, if a hyperspace $\Span(\partial_k\Delta^{(n)})$ is closest than any other, namely $\langle p\mid v_k^\star\rangle = \min_i \{ \langle p\mid v_i^\star\rangle\}$, then its orthogonal projection $p_k$ belongs to the hyperface $\partial_k\Delta^{(n)}$.
		
		For example, the orthogonal projection $h_k$ of $v_k$ on $\partial_k\Delta^{(n)}$ has homogeneous coordinates $h_k=\lambda_k v_k+\lambda_k^\star v_k^{\star}$ satisfying $0=\langle v_k^\star \mid h_k \rangle = \lambda_k - \lambda_k^\star$ thus $h_k=v_k+v_k^{\star}$.
		
		\begin{figure}[h]
			\centering
			\includegraphics[width=0.24\linewidth]{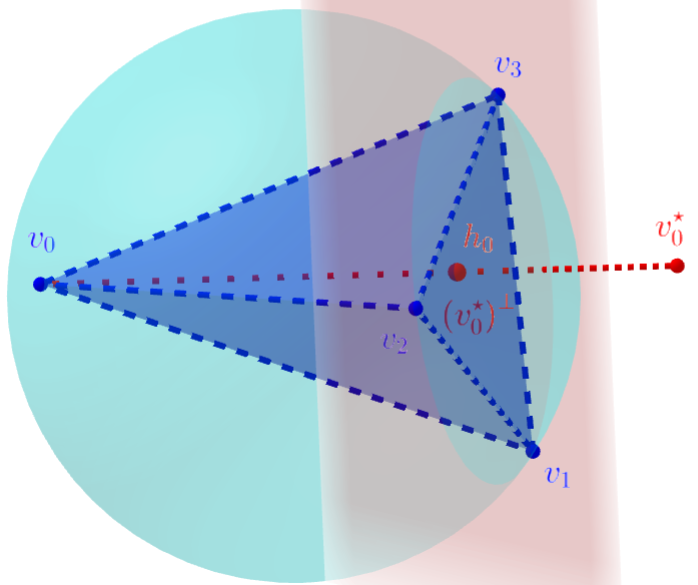}
			\hfill
			\includegraphics[width=0.32\linewidth]{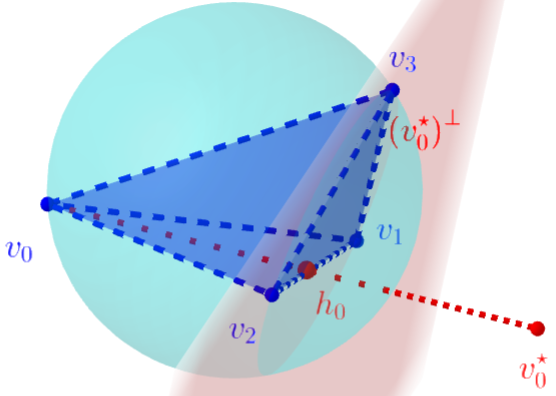}
			\hfill
			\includegraphics[width=0.32\linewidth]{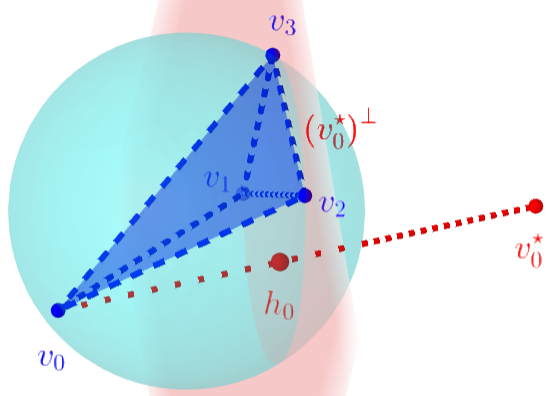}
			\caption{The orthogonal projection of $v_i\in \Delta^{(n)}$ on the hyperspace $\Span(\partial_j\Delta^{(n)})$ lies outside the hyperface $\partial_j\Delta^{(n)}$ when $\theta_{ij}^\star>\pi/2$.
				The ortholines may not intersect.}
			\label{fig:orthogonal projections}
		\end{figure}

		\subsubsection*{Gram matrices and dual gram matrices}
		
		The \emph{dual Gram matrix} of $\Delta$ is $G(\Delta^\star)=(\langle v_i^\star\mid v_j^\star \rangle)_{0\le i,j \le n}$.
		Since any two hyperfaces $\partial_i \Delta^{(n)}, \partial_j \Delta^{(n)}$ share some common vertices in $\Hyp^n\sqcup \partial \Hyp^n$, we have $\lvert \langle v_i^\star, v_j^\star \rangle \rvert \leq 1$, so the dual Gram matrix $G^\star(\Delta)$ consists of the cosines $\cos(\theta_{ij}^\star)$ of their dihedral angles denoted $\theta_{ij}^\star = \angle(\partial_i \Delta^{(n)}, \partial_j \Delta^{(n)})$.
		The \emph{Gram matrix} of $\Delta$ is $G(\Delta)=(\langle v_i \mid v_j \rangle )_{0\le i,j \le n}$.
		Recall that the vertices $v_i\in \mathbf{Y}_{>0}$ have their norms defined by the duality condition $\langle v_i\mid v_i^\star\rangle = 1$.
		
		\begin{lemma}[inverse Gram matrices]
			The Gram matrix is inverse to the dual Gram matrix and its coefficients express the basis $\Delta$ in terms of the dual basis $\Delta^\star$, in formula:
			\begin{equation}\textstyle
				\label{eq:Gram-dual-basis}
				G(\Delta)=G(\Delta^\star)^{-1}
				\qquad
				\langle v_i \mid v_j \rangle = \tfrac{(-1)^{i+j}\det G(\Delta^\star)_{\ne i, \ne j}}{\det G(\Delta^\star)}
				\qquad
				v_i=\sum_j \langle v_i \mid v_j \rangle \cdot v_j^\star
			\end{equation}
		\end{lemma}
		\begin{proof}
			Let us express the basis $\Delta$ in terms of the dual basis $\Delta^\star$: the coefficients $c_{ij}\in \R$ defined by $v_i=\sum_j c_{ij} v_j^\star$ satisfy $\langle v_i \mid v_k^\star \rangle =\sum_j c_{ij} \langle v_j^\star \mid v_k^\star \rangle$ namely $\operatorname{Id} = (c_{ij}) \times G(\Delta^\star)$, hence are the announced ratios of cofactors by determinant.
			Moreover $\langle v_i \mid v_k \rangle = \sum_{j} c_{ij} \langle v_j^\star \mid v_k \rangle = c_{ik}$.
		\end{proof}
		
		Let us mention in passing that by \cite[Theorem 7.2.4]{Ratcliffe_Foundations-Hyperbolic-Manifolds_2019}, one may characterize the minus dual Gram matrices of total simplices in $\Hyp^n$: they are all real symmetric $(1+n)\times (1+n)$ matrices $-G^\star$ with diagonal $+1$, negative determinant $\det(-G^\star)<0$, whose inverse $(-G^\star)^{-1}$ has negative entries, and whose diagonal $n\times n$ minors $(-G^\star)_{\ne k, \ne k}$ are positive definite.
		
		\subsubsection*{Regular simplices}
		
		A total simplex $\Delta$ of $\Hyp^n\sqcup \partial \Hyp^n$ is \emph{regular} when its stabilizer in $\Isom(\Hyp^n)$ acts transitively on its flags, so that it is isomorphic to the symmetric group $\mathfrak{S}_{n+1}$ of its vertices $\{v_0,\dots,v_n\}$.
		
		A regular simplex $\Delta$ must have dual Gram matrix $G(\Delta^\star)$ with diagonal $-1$ and constant off-diagonal $c\in [-1,1]$.
		We will deduce from \Cref{cor:incentred-Euclidean-model} that it exists $\iff c\in (1/n,1/(n-1)]$.
		In that case, it is unique up to the $\GO(1,n)$-action, since it is transitive on sequences in $\R^{1+n}$ with a given Gram-matrix.
		Let us compute its Gram matrix $G(\Delta)$.
		
		Denoting $\operatorname{J}_{1+n}$ the matrix with entries $1$, we may write $G(\Delta^\star)=c\operatorname{J}_{1+n} -(1+c)\operatorname{Id}_{1+n}$.
		The operator $\operatorname{J}_{1+n}$ (having eigenvalues $(1+n)$ with multiplicity $1$ and $0$ with multiplicity $n$), commutes with $\operatorname{Id}_{1+n}$ (having eigenvalues $1$ with multiplicity $1+n$).
		Thus $G(\Delta^\star)$ has eigenvalues $c(1+n)-(1+c)1=cn-1$ with multiplicity $1$ and $c0-(1+c)1$ with multiplicity $n$, so $\det(-G(\Delta^\star))=(1-cn)(1+c)^n$.
		In particular $G(\Delta)^\star$ is invertible $\iff c\notin \{-1, 1/n\}$ which we now assume, and note that $\det(-G(\Delta^\star))<0 \iff c>1/n$.
		
		Since $\operatorname{J}_{1+n}$ generates a commutative algebra of dimension $2$, the inverse belongs to this algebra so we may search $G(\Delta)=y\operatorname{J}_{1+n} - x\operatorname{Id}_{1+n}$ such that $G(\Delta)G(\Delta^\star)=\operatorname{Id}_{1+n}$ to find $x=1/(1+c)$ and $y=xc/(cn-1)$.
		For later purposes we note that $\sum_{ij} G(\Delta)_{ij} = (n+1)/(cn-1)$.
		
		In particular, $\Delta$ is ideal $\iff G(\Delta)$ has diagonal $0$ $\iff y=x \iff c=1/(n-1)$, in which case $g(\Delta)$ has off-diagonal $y=(n-1)/n$; hence $\sum_{ij} (\Delta)_{ij} = n^2-1$.
		
		\subsection{Incentred Euclidean model}
		\label{subsec:incenter}
		For a total simplex $\Delta\in (\Hyp^n\sqcup \partial \Hyp^n)^{1+n}$, the characterization of its inscribed sphere is classical and can be derived from~\cite[§6.5]{Ratcliffe_Foundations-Hyperbolic-Manifolds_2019}, but we detail a proof for completeness.
		We also provide a simple derivation of \Cref{eq:sinh-inradius-Gram} for its inradius recovering~\cite{Jacquemet_inradius-hyperbolic-simplex_2014} (where it is generalized to simplices in the extended projective space $\Proj{\R^{1+n}}$).
		Finally and most importantly, we define its \emph{inscribed dual simplex} and describe its Gram matrix in terms of the dual Gram matrix and the inradius, to discover a seemingly new \emph{inradius cosine-law} \Cref{eq:inradius-cosine-law}.
		
		\begin{proposition}[inscribed dual simplex]
			\label{prop:incenter-simplex-Hn}
			Consider a total simplex $\Delta$ of $\Hyp^n\sqcup \partial \Hyp^n$.
			
			Its \emph{incenter} $o$ is the unique point $p\in \Delta^{(n)}$ which is equidistant to all hyperplanes $\Span \partial_k \Delta^{(n)}$, and also the unique point $p\in \Delta^{(n)}$ that maximizes the distance to the $(n-1)$ skeleton $\Delta^{(n-1)}$.
			This distance $\delta^{n}_{n-1}(\Delta)$ called the \emph{inradius} is given in terms of $G(\Delta)$ and $G(\Delta^\star)$ by:
			\begin{equation}
				\label{eq:sinh-inradius-Gram}
				(\sinh \delta^n_{n-1})^{-2} 
				= \sum_{0\le i,j \le n} G(\Delta)_{ij}
				= \frac{1}{\det G(\Delta^\star)} \sum_{0\le i,j \le n} (-1)^{i+j}\det G(\Delta^\star)_{\ne i, \ne j}.
			\end{equation}
			
			The \emph{inscribed sphere} $\Sph_o(\delta^n_{n-1})$ of $\Delta$ is the unique sphere of maximal radius inscribed in $\Delta^{(n)}$: it is tangent to each hyperface $\partial_k \Delta^{(n)}$ at the orthogonal projection $v_k^\prime$ of the incenter $o$, obtained as the intersection $v_k^\prime= [ov_k^\star)\cap \partial_k \Delta^{(n)}$.
			
			We define the \emph{inscribed dual simplex} of $\Delta$ as $\Delta^\prime=(v_0^\prime,\dots,v_n^\prime) \subset (\Hyp^n)^{1+n}$.
			Its Gram matrix $G(\Delta^\prime) = (\cosh d(v_i^\prime, v_j^\prime))$ is given in terms of the dual Gram matrix $G(\Delta^\star)= (\cos \theta_{ij}^\star)$ by (\cref{fig:inradius-cosin-law_regular-simplex}): 
			\begin{equation}
				\label{eq:Gram-inscribed-dual}
				\tag{$G^\prime G^\star$}
				\langle v_i^\prime \mid v_j^\prime \rangle = 1+(\tanh \delta^n_{n-
					1})^2 (1+\langle v_i^\star \mid v_j^\star\rangle)
			\end{equation}
			implying in particular that the $\Isom(\Hyp^n)$-class of $\Delta^\prime$ determines the $\Isom(\Hyp^n)$-class of $\Delta$. 
			
			We define the \emph{incentered visual dual simplex} in the unit tangent sphere at $o$, consisting of the unit vectors $e_k^\prime \in \Sph_o(\Hyp^n)$ pointing towards $v_k'$.
			Its Euclidean \emph{incentred visual Gram matrix} $G(e_k^\prime)$ consists of their angle cosines $\langle e_i^\prime \mid e_j^\prime \rangle_o = \cos(\theta_{ij}^\prime)$.
			We find the \emph{inradius cosine law} (\cref{fig:inradius-cosin-law_regular-simplex}):
			\begin{equation}
				\tag{ICL}
				\label{eq:inradius-cosine-law}
				(\cosh \delta^n_{n-1})^2 = \frac{1+\langle v_i^\star \mid v_j^\star\rangle}{1-\langle e_i^\prime \mid e_j^\prime \rangle_o} = \frac{1+\cos(\theta_{ij})}{1-\cos(\theta_{ij}^\prime)}
			\end{equation}
			implying in particular that the $\GO(n)$-class of $(e_k^\prime)$ with $\delta^n_{n-1}$ determines the $\GO(1,n)$-class of $\Delta$.
		\end{proposition}
		
		\begin{proof}
			We will work with the Minkowski models of $\Delta,\Delta^\star$ in $(\R^{1+n},\langle \cdot \mid \cdot \rangle)$, and the associated homogeneous coordinates on $\Proj(\R^{1+n})$.
			
			On the convex set of $p\in \Delta^{(n)}$ defined by $\lvert \langle p,v_k^\star\rangle\rvert=\langle p,v_k^\star\rangle$, the equality of the $1+n$ distances $d(p,\partial_k\Delta^{(n)})$ amounts to the equality between the $1+n$ scalar products $\langle p,v_k^\star\rangle$.
			This yields a system of $n$ independent linear equations, so the solutions consist of a single line $[o]\in \Proj(\R^{1+n})$.
			Geometrically, this line is the intersection of all hyperplanes $H_{v_i-v_{j}}$ bisecting the hyperspaces $H_{v_k}$, and it is enough to consider only the intersection of the $n$ hyperplanes $H_{v_k-v_{k+1}}$.
			
			Working in the $\Delta$-homogeneous coordinates and using the duality $(\langle v_i^\star \mid v_j\rangle)_{ij} = \operatorname{Id}_{1+n}$, we find that this line passes through $\sum v_k =: so \in \mathbf{Y}_{s}$ for a unique $o\in \mathbf{Y}_{+1}$ and $s\in \R_{>0}$. 
			By definition, this point $o$ lies at the center of a sphere which is tangent to each hyperplane $\Span \partial_k \Delta^{(n)}$ at its orthogonal projection $v_k^\prime\in \partial_k \Delta^{(n)}$.
			Hence this is the unique sphere of maximal radius inscribed in $\Delta^{(n)}$, so its center $o$ is the unique point realizing the maximal distance to $\Delta^{(n-1)}$, and its radius $d(o,v_k^\prime)$ is indeed the Hausdorff distance $\delta^{n}_{n-1}$ from $\Delta^{(n)}$ to $\Delta^{(n-1)}$.
			This proves the characterizations of the incenter $o$, inradius $\delta^{n}_{n-1}(\Delta)$ and inscribed sphere.

			% Let us show that there is a unique point $o\in \Delta^{(n)}$ which maximizes the distance to the $(n-1)$-skeleton $\cup \partial_k \Delta^{(n)}$. 
			% For $s\in \R_{\ge0}$, the set $C(s)$ of points in $\Delta$ with $\sinh$-distance $>s$ from the $(n-1)$-skeleton $\cup_k\partial_k \Delta^{(n)}$ (in formula $C(s)=\Delta^{(n)}\setminus \cup_k \{p\in \Hyp^{(n)} \colon \langle p, v_k^\star \rangle < s\}$) is convex and non-empty for $s$ small enough; and as $s$ grows the family $s\mapsto C(s)$ decreases until it is reduced to a point: this yields the center $o(\Delta)$ of the unique inscribed sphere of maximal radius $\delta^n_{n-1}(\Delta)$ (whose $\sinh \delta^n_{n-1}(\Delta)$ is given by the corresponding value of $s$).
			% Note that any maximally inscribed sphere in $\Delta$ must be tangent to all of its hyperplanes $\partial_k \Delta^{(n)}$ and the tangency point must lie in their interior (by convexity of the sphere and the non-zero angles between the hyperfaces), hence its center must be equidistant to the hyperplanes $\Span \partial_k \Delta^{(n)}$.
			
			The distance $\delta^{n}_{n-1}$ from $o=\tfrac{1}{s}\sum_0^n v_k \in \mathbf{Y}_{+1}$ to the hyperplane $\Span \partial_0\Delta^{(n)}=(v_0^\star)^\perp$ satisfies $\sinh(\delta^{n}_{n-1})=\langle v_0^\star \mid o \rangle = \tfrac{1}{s} \langle v_0^\star \mid \sum_0^n v_k \rangle = 1/s$.
			Moreover we have
			\(s^2= \langle so\mid so \rangle 
			= \sum_{i,j} \langle v_i \mid v_j \rangle\) 
			% is the norm-square of the vector $\operatorname{diag}(1,\dots,1)$ with respect to the Gram matrix $G(\Delta)$, and 
			and rewriting $\langle v_i\mid v_j\rangle$ using \eqref{eq:Gram-dual-basis} yields the second equality in \eqref{eq:sinh-inradius-Gram}.
			% \begin{comment}
				% We may compute $\delta^n_{n-1}(\Delta)$ as the distance from $o$ to its orthogonal projection $v_0^\prime=\tfrac{1}{s_0^\prime}\sum_1^n v_k \in \mathbf{Y}_{+1}$, to find that \(ss_0 (\cosh \delta^n_{n-1}) = \langle so\mid s_0^\prime v_0^\prime \rangle \) which may also be written as:
				% \begin{equation*}
					% \left(\cosh \delta^n_{n-1}\right)^2 =\frac{\langle so\mid s_0^\prime v_0^\prime \rangle^2}{\langle so\mid so \rangle\cdot \langle s_0^\prime v_0^\prime\mid s_0^\prime v_0^\prime \rangle} = \frac{\sum_{0\le i \le n} \sum_{1\le j \le n} \langle v_i\mid v_j\rangle}{\left(\sum_{0\le i,j \le n} \langle v_i\mid v_j\rangle\right) \left( \sum_{1\le i,j \le n} \langle v_i\mid v_j\rangle\right)}
					% \end{equation*}
				% and of course one may rewrite all this in terms of cofactors of the dual Gram matrix $G(\Delta^\star)$.
				% \end{comment}
			
			%Consider the incentred dual simplex $\Delta^\prime$.
			Since $o,v_k^\prime, v_k^\star\in \R^{1+n}$ are distinct and projectively aligned, they satisfy a linear relation of the form $v_k^\prime =\lambda_k o + \mu_k v_k^\star$.
			Expanding $1=\langle v_k^\prime \mid v_k^\prime \rangle$ yields $\lambda_k=(\cosh{\delta^n_{n-1}})^{-1}$ and expanding $0=\langle v_k^\star \mid v_k^\prime \rangle$ yields $\mu_k=(\tanh{\delta^n_{n-1})}$.  
			Hence for all $k$ we have $v_k^\prime = (\cosh{\delta^n_{n-1}})^{-1}  o+(\tanh{\delta^n_{n-1})}) v_k^\star$.
			We may thus expand $\langle v_i^\prime \mid v_j^\prime\rangle$ in terms of the inradius and $\langle v_i^\prime \mid v_j^\prime\rangle$ to find \eqref{eq:Gram-inscribed-dual}.
			
			Finally, applying the hyperbolic law of cosines to the triangle $(v_i^\prime, o, v_j^\prime)$ gives $\langle v_i^\prime \mid v_j^\prime \rangle = \cosh d(v_i^\prime, v_j^\prime) = (\cosh \delta^n_{n-1})^2-(\sinh \delta^n_{n-1})^2 (\cos \theta_{ij}^\prime)$, and combining with \eqref{eq:Gram-inscribed-dual} yields \eqref{eq:inradius-cosine-law}.
		\end{proof}
		
		\begin{figure}
			\centering
			\includegraphics[width=0.28\linewidth]{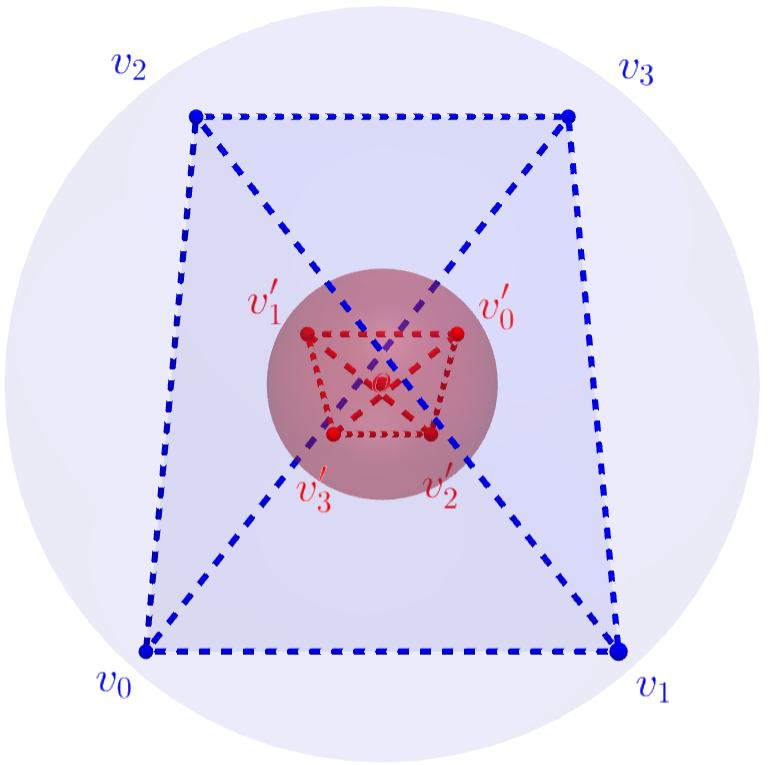}
			\qquad
			\includegraphics[width=0.5\linewidth]{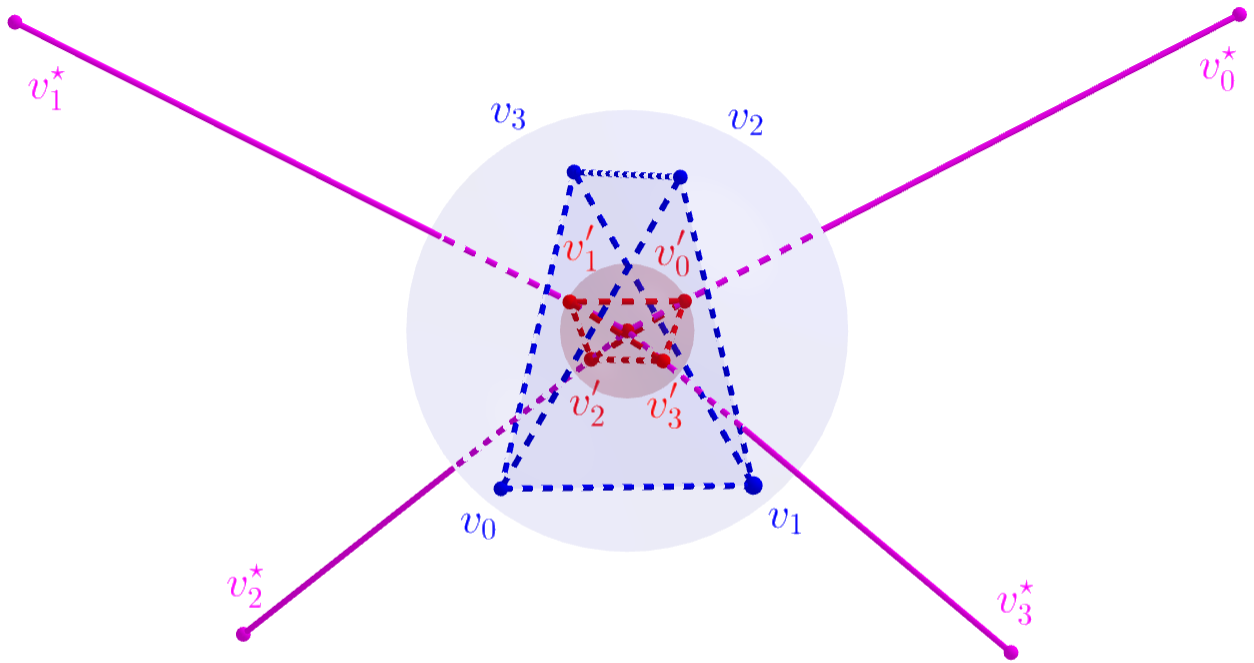}
			
			\caption{Incentred Euclidean model for $\Delta, \Delta^\prime, \Delta^\star$.}
			\label{fig:incentered-Euclidean-model}
		\end{figure}
		
		\begin{corollary}[incentred Euclidean model]
			\label{cor:incentred-Euclidean-model}
			Consider a total simplex $\Delta$ of $\Hyp^n\sqcup \partial \Hyp^n$.
			
			We may act by $\Isom(\Hyp^n)$ to place its incenter $o$ at the origin of the Euclidean chart $\R^n$ of the projective model, to define the incentered Euclidean model of $\Delta$ and $ \Delta^\prime$.
			The stabiliser of $\Delta^{(n)}$ in $\Isom(\Hyp^n)$ fixes $o$, hence equals the stabilizer of its incentred Euclidean model in $\Isom(\Sph^{n-1})$.
			
			The incentred Euclidean model of $\Delta^\prime$ is circumscribed by the sphere $\Sph_o(\tau)$ with Euclidean radius $\tau=\tanh \delta^{n}_{n-1}(\Delta)$ and has Gram matrix $\tau^2 \cdot G(e_k^\prime)=\tau^2 \cdot (\cos \theta_{ij}^\prime)$.
			
			In particular if $\Delta$ is ideal then its incentred Euclidean model is a Euclidean simplex inscribed in the unit sphere with incenter at the origin, inradius $\tanh \delta^{n}_{n-1}(\Delta)$, and dual Gram matrix $G(e_k^\prime)=(\cos \theta_{ij}^\prime)$.
			
			This yields a correspondence between $\Isom(\Hyp^n)$-classes of total ideal simplices in $\partial \Hyp^n$ and $\Isom(\Sph^{n-1})$-classes of Euclidean simplices inscribed in $\Sph^{n-1}$ with incenter at the origin.
		\end{corollary}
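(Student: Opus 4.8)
The plan is to transport \Cref{prop:incenter-simplex-Hn} into the Euclidean ball model, where placing the incenter $o$ at the origin makes every relevant construction radial. Since $\Isom(\Hyp^n)$ acts transitively on $\Hyp^n$, I first move $o$ to the origin of the chart, which places $\Delta$ and its inscribed dual $\Delta'$ in the incentred Euclidean model; this gives the first assertion. For the second, the stabiliser of $\Delta^{(n)}$ preserves the intrinsically defined skeleton $\Delta^{(n-1)}$ and acts by isometries, so it fixes the unique maximiser of the distance to $\Delta^{(n-1)}$, which \Cref{prop:incenter-simplex-Hn} identifies with $o$. As the stabiliser of $o$ in $\Isom(\Hyp^n)$ is exactly $\GO(n)=\Isom(\Sph^{n-1})$, the stabiliser of $\Delta^{(n)}$ coincides with the stabiliser of the incentred Euclidean model in $\Isom(\Sph^{n-1})$.

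For the third assertion, I use that geodesics issued from the origin are Euclidean diameters and that the hyperbolic sphere $\Sph_o(r)$ is the Euclidean sphere of radius $\tanh r$. Each $v_k'$ lies at hyperbolic distance $\delta^n_{n-1}$ from $o$ in the direction $e_k'$, so its Euclidean position is $\tau\, e_k'$ with $\tau=\tanh\delta^n_{n-1}$; hence the $v_k'$ lie on $\Sph_o(\tau)$, so $\Delta'$ is circumscribed by $\Sph_o(\tau)$, and its Euclidean Gram matrix is $(\langle \tau e_i' \mid \tau e_j'\rangle)=\tau^2(\cos\theta_{ij}')=\tau^2 G(e_k')$.

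The fourth assertion requires the one genuinely non-formal point, since the Klein model is conformal only at its centre: I must check that the hyperbolic foot $v_k'$ of $o$ on $\partial_k\Delta^{(n)}$ is also its Euclidean foot. This follows because $v_k'=[o v_k^\star)\cap\partial_k\Delta^{(n)}$ and the line $(o v_k^\star)$ points along the Euclidean normal of $H_{v_k^\star}$: writing $v_k^\star=(s,\vec w)$ in Minkowski coordinates, the hyperplane is $\{\vec w\cdot\vec p=s\}$ with Euclidean normal $\vec w$, while $v_k^\star$ sits in the chart at $\vec w/s$, so the radial line from $o$ is directed by $\vec w$ and meets the hyperplane at its Euclidean foot. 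Using $\langle v_k^\star\mid v_k^\star\rangle=-1$, the Euclidean distance from $o$ to $\partial_k$ equals $|s|/\sqrt{1+s^2}=\sinh\delta^n_{n-1}/\cosh\delta^n_{n-1}=\tau$, so $\Sph_o(\tau)$ is the Euclidean inscribed sphere and $o$ is the Euclidean incenter. When $\Delta$ is ideal its vertices lie on $\partial\Hyp^n=\Sph^{n-1}$, giving a Euclidean simplex inscribed in the unit sphere with incenter at the origin, inradius $\tau$, and dual Gram matrix $G(e_k')=(\cos\theta_{ij}')$.

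Finally, the correspondence sends an $\Isom(\Hyp^n)$-class of total ideal simplices to the $\Isom(\Sph^{n-1})$-class of its incentred Euclidean model; it is well defined because the only freedom left after placing $o$ at the origin is the action of $\GO(n)=\Isom(\Sph^{n-1})$. Its inverse reads a Euclidean simplex inscribed in $\Sph^{n-1}$ as an ideal hyperbolic simplex, and the point to verify is that a Euclidean incenter at the origin forces the hyperbolic incenter there as well. This is the converse of the computation above: with $\sinh d(o,\partial_k)=|s_k|$ and Euclidean distance $|s_k|/\sqrt{1+s_k^2}$, the two are related by a strictly increasing function of $|s_k|$, so the hyperbolic distances $d(o,\partial_k)$ are all equal if and only if the Euclidean ones are; since the incenter is the unique interior equidistant point in either metric, the two incenter conditions coincide, yielding a bijection. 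The main obstacle is precisely this reconciliation of hyperbolic and Euclidean orthogonality and equidistance at the centre, while the remainder is a direct transcription of \Cref{prop:incenter-simplex-Hn}.
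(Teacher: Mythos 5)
Your proposal is correct and follows essentially the same route as the paper: transport \Cref{prop:incenter-simplex-Hn} into the ball model using the conformality at the centre and the relation $r\mapsto\tanh r$ between hyperbolic and Euclidean radii of spheres centred at $o$. The only difference is that where the paper identifies the hyperbolic and Euclidean orthogonal projections $v_k'$ synthetically (as the tangency point of the inscribed sphere, which is simultaneously a hyperbolic and a Euclidean sphere), you verify the same coincidence by an explicit Minkowski-coordinate computation showing that the radial line $(o\,v_k^\star)$ is the Euclidean normal to $\Span\partial_k\Delta^{(n)}$, and you also spell out the converse equidistance argument for the bijection, which the paper leaves implicit.
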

		
		\begin{proof}
			Placing the incenter $o$ at the origin of the Euclidean chart $\R^n=\{x\in \R^{1+n}\mid x_0=1\}$ of the projective model $\Proj{\R^{1+n}}$ yields an isometry between the tangent space at $o\in \Hyp^n$ and that Euclidean chart $\R^n$.
			In particular the tangent vectors $e_k^\prime$ at $o$ pointing towards $v_k^\prime$ and $v_k^\star$ have Euclidean angles $\theta_{ij}^\prime$.
			Moreover, the hyperbolic spheres centred at $o$ are also Euclidean spheres centred at $o$, and we know that the function sending the hyperbolic distance from $o$ to the Euclidean distance from $o$ is $r\mapsto \tanh(r)$.
			Consequently the point $v_k^\prime$, which is characterized as the tangency point of the maximally inscribed sphere in $\Delta^{(n)}$ with the hyperplane $\Span \partial_k \Delta^{(n)}$, is the orthogonal projection of $o\in  \Delta^{(n)}$ on $\Span \partial_k \Delta^{(n)}$ both for the hyperbolic and Euclidean metrics.
			The sequence of claims in the proposition follows from these observations.
		\end{proof}
		
		\begin{remark}[Minkowski to Euclid]
			The Euclidean models for $\Delta, \Delta^\star, \Delta^\prime$ in $\R^n$ are obtained by rescaling the Minkowski models for $\Delta, \Delta^\star, \Delta^\prime$ in $\R^{1+n}$ by the heights (first coordinates).
		\end{remark}
		
		\begin{remark}[regular simplices]
			\Cref{cor:incentred-Euclidean-model} implies that a total hyperbolic simplex is regular if and only if its incentred Euclidean model is regular (recovering \cite[Theorem 6.5.19]{Ratcliffe_Foundations-Hyperbolic-Manifolds_2019}), in which case it is unique up to the action of $\Isom(\Hyp^n)$.
			In particular, according to our Gram-matrix analysis in \Cref{subsec:hyperbolic-simplices}, the unique total ideal regular simplex of $\Hyp^n\sqcup \partial \Hyp^n$ has cosines of dihedral angles $\cos(\theta)=1/(n-1)$, and inradius satisfying $(\sinh \delta^n_{n-1})^{-2} = n^2-1$ of equivalently $\tanh(\delta^n_{n-1})=1/n$.
			It follows from this Gram-matrix analysis in \Cref{subsec:hyperbolic-simplices} that total regular simplices are parametrized by $\cos(\theta)\in (1/n,1/(n-1)]$ or $\tanh \delta^n_{n-1} =  \sqrt{\frac{cn-1}{n(c+1)}} \in (0, 1/n]$.
		\end{remark}
		
		\subsection{Simplices with maximal Hausdorff distance to the \texorpdfstring{$m$-skeleton}{m-skeleton}}
		
		\begin{theorem}[hyperbolic simplices of maximal inradius are regular]
			\label{thm:ideal-simplices-max-inradius}
			A total simplex $\Delta$ of $\Hyp^n\sqcup \partial \Hyp^n$ has inradius $\le \tanh^{-1}(1/n)$, with equality if and only if it is ideal and regular.
		\end{theorem}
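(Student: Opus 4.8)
The plan is to reduce the hyperbolic inequality to a Euclidean extremal problem through the incentred model, and to pin down the extremizers by the coincidence of their classical centers. First I would reduce to ideal simplices: the discussion preceding the statement already shows that $\delta^n_{n-1}$ is monotone under inclusion of $n$-skeleta and that its maximal locus lies among the ideal simplices, so it suffices to prove that among total \emph{ideal} simplices the inradius is at most $\tanh^{-1}(1/n)$, with equality exactly for the regular one; the bound for arbitrary total simplices and the idealness of the equality case then follow formally.

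Next I would pass to the incentred Euclidean model. By \Cref{cor:incentred-Euclidean-model}, a total ideal simplex $\Delta$ becomes a Euclidean $n$-simplex $P$ whose vertices $P_i$ lie on the unit sphere $\Sph^{n-1}$—so its circumcenter is the origin and its circumradius is $1$—and whose incenter is \emph{also} the origin, with $\tanh\delta^n_{n-1}(\Delta)$ equal to the Euclidean inradius $\tau$ of $P$. The statement thus becomes the Euler-type inequality $\tau\le 1/n$ for a simplex inscribed in the unit sphere, with equality iff $P$ is regular. Recording the data to be used: writing $A_i$ for the $(n-1)$-volume of the facet opposite $P_i$ and $u_i$ for its outward unit normal, that facet lies in the hyperplane $\langle x,u_i\rangle=\tau$, while the incenter condition and the Minkowski closing relation read $\sum_i A_iP_i=0$ and $\sum_i A_iu_i=0$.

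The heart of the argument is the variational step. Working in barycentric coordinates and maximizing $\tau=nV/\sum_iA_i$ as each vertex $P_i$ varies on the sphere, I would compute the first variation and show that its vanishing along all admissible directions forces each vertex to be antiparallel to the opposite facet normal, $P_i=-u_i$. Equivalently every altitude of $P$ then passes through the origin, so at a maximizer $P$ is orthocentric and its orthocenter coincides with its incenter (and its circumcenter). By \cite[Theorem 4.3]{Edmonds-Allan-Martini_Orthocentric-simplices-centers_2005} an orthocentric simplex whose orthocenter coincides with its incenter is regular; conversely the regular inscribed simplex realizes $\tau=1/n$, in agreement with the Gram-matrix computation of \Cref{subsec:hyperbolic-simplices} giving $(\sinh\delta^n_{n-1})^{-2}=n^2-1$, i.e.\ $\tanh\delta^n_{n-1}=1/n$. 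This yields $\tau\le 1/n$ with equality precisely for the regular ideal simplex.

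I expect the variational step to be the main obstacle. Two points need care: the inradius is only piecewise smooth in the vertices, since the combinatorial type of the facets can jump, so one must first check that a maximizer is a non-degenerate simplex on which $\tau$ is differentiable; and the gradient of each shared facet area $A_j$ with respect to a common vertex $P_i$ must be computed and then combined with the two relations $\sum_iA_iP_i=0$ and $\sum_iA_iu_i=0$ in order to collapse the optimality conditions to the clean antiparallel statement $P_i=-u_i$. Once this identification of centers is achieved, the classification of orthocentric simplices supplies the regularity.
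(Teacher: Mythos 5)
Your reduction to the ideal case and your passage to the incentred Euclidean model via \Cref{cor:incentred-Euclidean-model} are exactly the paper's first two steps, and your endgame (incenter $=$ circumcenter $=$ orthocenter, then \cite[Theorem 4.3]{Edmonds-Allan-Martini_Orthocentric-simplices-centers_2005}) is also the paper's. The problem is the middle: the entire content of the theorem is the inequality $\tau\le 1/n$ together with the identification of the equality case, and you delegate both to a first-variation computation that you do not perform. The assertion that the Euler--Lagrange system for $\tau=nV/\sum_i A_i$ on $(\Sph^{n-1})^{n+1}$ collapses to $P_i=-u_i$ is essentially the theorem restated, and nothing in the proposal justifies it: the gradients $\nabla_{P_i}A_j$ involve the ridge volumes and the unit vectors from the ridges toward $P_i$, and it is not apparent that the resulting stationarity conditions, even combined with $\sum_i A_iP_i=0$ and $\sum_i A_iu_i=0$, force antiparallelism rather than some weaker relation; moreover first-order conditions hold at every critical point, so you would additionally have to exclude non-regular critical configurations or argue only about global maximizers. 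Note also that if you did establish $P_i=-u_i$ for all $i$, the citation would be superfluous: then $\langle P_i,P_j\rangle=-\langle u_i,P_j\rangle=-c_i$ is independent of $j\ne i$, hence of the pair by symmetry, so the Gram matrix of the vertices is already that of a regular simplex.

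The paper avoids all of this with a two-line barycentric estimate that exploits precisely the normalization your variational setup discards, namely that the \emph{incenter} sits at the origin. Writing $o=\sum_k\lambda_k v_k=0$ with $\lambda_k>0$, $\sum_k\lambda_k=1$ and $\lvert v_k\rvert=1$, the point $\tfrac{1}{1-\lambda_k}\sum_{j\ne k}\lambda_j v_j=\tfrac{-\lambda_k}{1-\lambda_k}v_k$ lies on the facet opposite $v_k$ and has norm $\tfrac{\lambda_k}{1-\lambda_k}$, whence $\tau\le\min_k\tfrac{\lambda_k}{1-\lambda_k}\le\tfrac{1}{n}$ since some $\lambda_k\le\tfrac{1}{n+1}$; equality forces all $\lambda_k=\tfrac{1}{n+1}$ and each line $(v_ko)$ to meet the opposite facet orthogonally, i.e.\ $o$ is an orthocenter, and only then is the cited theorem invoked. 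I would replace the variational step by this argument; carrying out your version amounts to reproving the Euler inequality $R\ge nr$ for arbitrary inscribed simplices with its equality case, which is strictly more than what is needed.
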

		
		\begin{remark}[two coordinate systems]
			The Minkowski model of $\Delta$ in $\mathbf{Y}_{\ge 0}$ defines a homogeneous coordinate system for th projective space $\Proj{\R^{1+n}}$, that was used in the proof of \Cref{prop:incenter-simplex-Hn}.
			The Euclidean representative of $\Delta$ in $\Ball^n$ defines a barycentric coordinate system for the Euclidean chart $\R^n$, that will be used in the proof of \Cref{thm:ideal-simplices-max-inradius}.
			These coordinate systems are related by the heights, but we will not explicit this relation.% (which lacks a geometric interpretation).
		\end{remark}
		
		\begin{proof}
			Since every total simplex of $\Hyp^n\sqcup \partial \Hyp^n$ is contained in an ideal one, it suffices to consider total simplices $\Delta$ of $\Hyp^n \sqcup \partial \Hyp^n$ that are ideal.
			We may work with the incentred Euclidean model: by Proposition \ref{cor:incentred-Euclidean-model} their regularity are equivalent, and their spheres centred at their common incenter $o$ coincide with their radii related by the increasing function $r\mapsto \tanh{r}$.
			Hence we must show that a Euclidean simplex $\Delta=(v_0,\dots,v_n)$ inscribed in the unit sphere $\Sph^{n-1}$ whose incenter $o$ lies at the origin has inradius at most $1/n$ with equality if and only if it is regular.
			
			In the barycentric coordinate system of $\R^n$ defined by $\Delta=(v_k) \subset \Sph_o^{n-1}$, the origin is a convex combination $o = \sum \lambda_k v_k$ of the vertices with weights $\lambda_k>0$ satisfying $\sum_0^n \lambda_k = 1$.
			The Euclidean distance from $o$ to its orthogonal projection $v_k^\prime$ on $\partial_k \Delta^{(n)}$ is at most the distance from $o$ to $\tfrac{-\lambda_k}{1-\lambda_k}v_k=\tfrac{1}{1-\lambda_k}\sum_{j\ne k} \lambda_jv_j \in \partial_k \Delta^{(n)}$, that is $\tfrac{\lambda_k}{1-\lambda_k}$, with equality if and only if $v_k^\prime=\tfrac{-\lambda_k}{1-\lambda_k}v_k$, implying that $(v_k o)$ intersects $\partial_k\Delta^{(n)}$ orthogonally at $v_k^\prime$.
			Hence the Euclidean inradius satisfies $\tanh(\delta^n_{n-1})\le \min\{\lambda_k/(1-\lambda_k) \colon 0 \le k \le n\} \le \left(\tfrac{1}{n+1}\right)/\left(1-\tfrac{1}{n+1}\right) = \tfrac{1}{n}$ with equality to $\tfrac{1}{n}$ if and only if all $k$ we have $\lambda_k=\tfrac{1}{n+1}$ and $(v_ko)$ intersects $\partial_k\Delta^{(n)}$ orthogonally at $v_k^\prime=\tfrac{-1}{n}v_k$.
			The case of equality implies that $\Delta$ admits $o$ as an orthocenter, and as $o$ coincides with the incenter and circumcenter, we deduce from \cite[Theorem 4.3]{Edmonds-Allan-Martini_Orthocentric-simplices-centers_2005} that $\Delta$ must be regular.
		\end{proof}
		
		\begin{figure}[h]
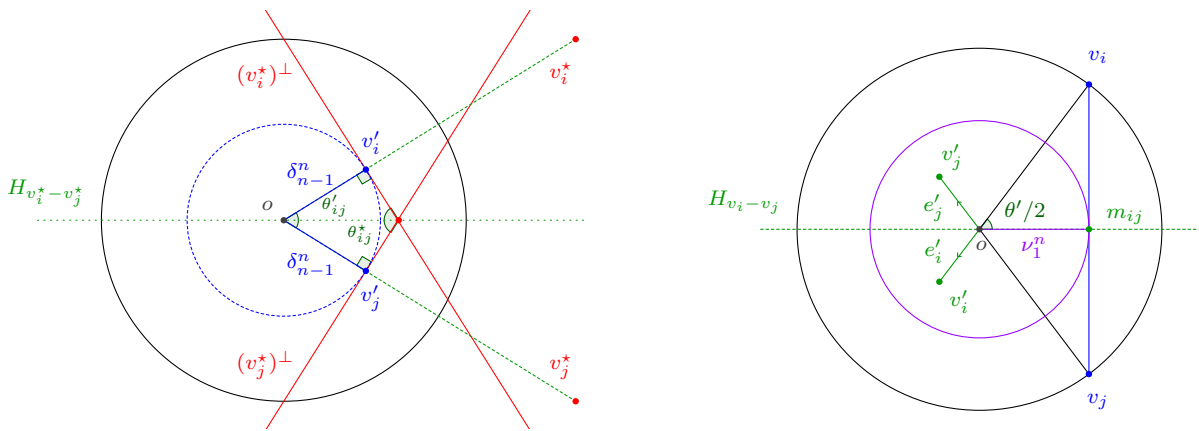

			\centering
			\includestandalone{images/tikz/inradius-cosine-law}
			\includestandalone{images/tikz/computing-theta-prime-regular}
			\caption{Inradius cosine law. Computing $\theta_{ij}'$ for a regular simplex.}
			\label{fig:inradius-cosin-law_regular-simplex}
		\end{figure}
		
		Now fix $m,n\in \N$ with $n>m>0$, and recall from \Cref{subsec:hyperbolic-simplices} that the continuous bounded function $\delta^n_m \colon (\Hyp^n\sqcup \partial\Hyp^n)^{n+1} \to \R_{\ge 0}$ achieves its maximum $\mu^n_m$ on a closed subset of total ideal simplices.
		We wish to characterize this maximizing locus.
		
		We know from \Cref{thm:ideal-simplices-max-inradius} that for $n\ge 2$, the total ideal simplices $\Delta\subset (\partial \Hyp^n)^{n+1}$ with maximal inradius $\delta^n_{n-1}(\Delta)=\mu^n_{n-1}$ are regular and $\tanh(\mu^n_{n-1}) = 1/n$.
		
		\begin{theorem}[maximal Hausdorff distance to $m$-skeleton]
			\label{thm:Hausdist-simplex-skeleta}
			For integers $n>m>0$, a simplex $\Delta$ in $\Hyp^n\sqcup \partial \Hyp^n$ satisfies $\delta^n_m(\Delta)= \mu^n_m$ if and only if $\Delta$ is total, ideal and regular.
			% Moreover $\mu^n_m = \prod_{m<k\le n} \cosh \tanh^{-1}(1/k)$.
			Moreover, $\mu^{n}_{1}$ is given 
			by \(\left(\tanh \mu^n_1\right)^2 = \tfrac{n-1}{2n}\) so $\mu^n_1$ grows as $n\to \infty$ to $\mu^\infty_1 = \tanh^{-1}(1/\sqrt{2}) = \log(1+\sqrt{2})$.
			% by \(\left(\sinh \mu^n_1\right)^2 = \tfrac{n-1}{n+1}\) so $\mu^n_1$ grows as $n\to \infty$ to $\mu^\infty_1 = \sinh^{-1}(1) = \log(1+\sqrt{2})$.
		\end{theorem}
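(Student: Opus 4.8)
The plan is to reduce, using the discussion preceding the statement, to a \emph{total ideal} simplex $\Delta$, and to bound $d(p,\Delta^{(m)})$ for an arbitrary $p\in\Delta^{(n)}$ by telescoping orthogonal projections onto a descending flag of faces. Writing $c=n-m$, I set $p_0=p$ and $F_0=\Delta^{(n)}$, and inductively let $F_{j+1}$ be a \emph{closest} hyperface of the $(n-j)$-face $F_j$ to the point $p_j$, then project $p_{j+1}=\pi_{V_{j+1}}(p_j)$ with $V_j=\Span(F_j)$. Two facts recalled in \Cref{subsec:hyperbolic-simplices} drive the argument: the orthogonal projection onto a closest hyperface lands inside that hyperface, so inductively $p_j\in F_j$ and the construction terminates at $p_c\in F_c\subset\Delta^{(m)}$; and, since the $V_j$ form a nested chain of totally geodesic subspaces, the projections compose as $\pi_{V_{j+1}}=\pi_{V_{j+1}}\circ\pi_{V_j}$, so that each triangle $(p,p_j,p_{j+1})$ is right-angled at $p_j$.

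Applying the hyperbolic Pythagorean theorem to these right triangles and telescoping yields $\cosh d(p,p_c)=\prod_{j=0}^{c-1}\cosh d(p_j,p_{j+1})$. Each $F_j$ is a total ideal $(n-j)$-simplex inside $V_j\cong\Hyp^{n-j}$, and $d(p_j,p_{j+1})=d(p_j,\partial F_j)$ is at most the inradius of $F_j$, which by \Cref{thm:ideal-simplices-max-inradius} is at most $\tanh^{-1}\!\big(1/(n-j)\big)$. Using $\cosh\tanh^{-1}(1/k)=k/\sqrt{k^2-1}$ together with the telescoping identity $\prod_{k=m+1}^{n}k^2/(k^2-1)=n(m+1)/\big(m(n+1)\big)$, I obtain $\cosh^2 d(p,\Delta^{(m)})\le\cosh^2 d(p,p_c)\le\tfrac{n(m+1)}{m(n+1)}$ for every $p$, hence $\mu^n_m\le B$ where $B:=\cosh^{-1}\!\sqrt{n(m+1)/(m(n+1))}$.

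Achievability and the equality analysis go together. For a regular $\Delta$ I take $p$ to be the incenter $o$: by the $\mathfrak{S}_{n+1}$-symmetry, the orthogonal projection of $o$ onto the span of any $m$-face is the incenter of that face and lies inside it, so the successive projections of $o$ pass through the incenters of the subfaces and $d(o,\Delta^{(m)})=B$; combined with the upper bound this gives $\mu^n_m=B$. Conversely, if $\delta^n_m(\Delta)=\mu^n_m=B$ is attained at some $p$, then every inequality in the chain $\cosh B=\cosh d(p,\Delta^{(m)})\le\cosh d(p,p_c)=\prod_j\cosh d(p_j,p_{j+1})\le\prod_j\cosh\tanh^{-1}\!\big(1/(n-j)\big)=\cosh B$ is an equality; as each factor lies in the interval $[1,\cosh\tanh^{-1}(1/(n-j))]$ and the product of the upper endpoints is reached, every factor is extremal. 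In particular the $j=0$ factor forces $d(p,\partial\Delta)=\tanh^{-1}(1/n)$, so $\Delta$ has inradius $\tanh^{-1}(1/n)$ and is regular by \Cref{thm:ideal-simplices-max-inradius}. Thus the maximizers are exactly the total ideal regular simplices.

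Specializing to $m=1$ gives $\cosh^2\mu^n_1=\tfrac{2n}{n+1}$, equivalently $(\tanh\mu^n_1)^2=\tfrac{n-1}{2n}$, which increases to $\tfrac12$; hence $\mu^n_1\to\tanh^{-1}(1/\sqrt2)=\log(1+\sqrt2)$. I expect the delicate point to be the achievability half for regular simplices: one must check both that the successive projections of the incenter land precisely on the incenters of the subfaces, so that the product bound is genuinely realized, and that these landing points are the true nearest points of the faces to $o$; this is exactly where the $\mathfrak{S}_{n+1}$-symmetry and the ``closest hyperface projection lands inside'' property are needed. The composition-of-projections identity underlying the telescoping product is the other step requiring care.
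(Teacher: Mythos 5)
Your proposal is correct and follows essentially the same route as the paper: project onto a descending chain of closest faces, telescope the hyperbolic Pythagorean theorem to get $\cosh\delta^n_m(\Delta)\le\prod_{n'=m+1}^{n}\cosh\mu^{n'}_{n'-1}$, and extract regularity from the equality case of \Cref{thm:ideal-simplices-max-inradius} applied to the top factor. The only (harmless) divergence is at the end: you evaluate $\mu^n_1$ by the telescoping product $\prod_{k=m+1}^{n}k^2/(k^2-1)=n(m+1)/(m(n+1))$, which the paper mentions as an option but replaces by a direct computation of the angle $\theta'$ with $\cos\theta'=-1/n$ in the incentred Euclidean model; both give $(\tanh\mu^n_1)^2=\tfrac{n-1}{2n}$.
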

		
		\begin{proof}
			Fix a (total ideal) simplex $\Delta$ of $\Hyp^n\sqcup \partial \Hyp^n$.
			
			For $0\le n'' \le n' \le n$ define $\delta^{n'}_{n''}(\Delta)$ to be the maximum on its $n'$-faces of their Hausdorff distance to their $n''$-skeleton:
			\(
			\delta^{n'}_{n''}(\Delta) = \max \{\delta^{n'}_{n''}(F) \colon F\subset \Delta,\, \operatorname{Card}(F)=1+n''\}
			\).
			Recall that for every point in a $n'$-face $p\in F^{(n')}$, its orthogonal projection on the closest linear span of a hyperface, say $\Span \partial_0 F^{(n')}$, actually belongs to the hyperface $\partial_0F^{(n')}$.
			Hence for $n'''\le n''\le n'$, the orthogonal projection from any point $p'\in F^{(n')}$ to a closest point $p'''\in F^{(n''')}$ factors through an orthogonal projection to a closet point $p''\in F^{(n'')}$, so by the Pythagorean theorem for hyperbolic triangles we have $\cosh d(p',F^{(n''')}) = \cosh d(p',F^{(n'')})\cosh d(p'', F^{(n''')})$.
			Taking the maximum over $p'\in F^{(n')}$ yields $\cosh \delta^{n'}_{n'''}(\Delta) \le \cosh \delta^{n'}_{n''}(\Delta)\cosh \delta^{n''}_{n'''}(\Delta)$.
			Consequently $\cosh \delta^{n}_{m}(\Delta) \le \prod_{m+1}^{n} \cosh \delta^{n'}_{n'-1}(\Delta)$ which is $\le \prod_{m+1}^{n} \cosh \mu^{n'}_{n'-1}$ with equality only if $\Delta$ is regular.
			
			For such a regular simplex $\Delta$, the supremum defining $\delta^n_m(\Delta)=\sup \{d(p,\Delta^{(m)})\colon p\in \Delta^{(n)}\}$ is achieved (only) at its incenter $o$, and equals its distance to any $m$-face.
			It could be calculated from the product formula $\cosh(\mu^n_m)=\prod_{m<n'\le n} \cosh \tanh^{-1}(1/n')$ using $\cosh \tanh^{-1}(x)=\tfrac{1}{\sqrt{1-x^2}}$ but let us provide a direct computation of $\mu^n_1$ using the incentred Euclidean model.
			
			Consider a total regular Euclidean simplex $\Delta=(v_0,\dots, v_n)$ inscribed in the unit sphere $\Sph_o^{n-1}$ and let $\nu^n_1$ be the distance from the origin to any edge $(v_iv_j)$.
			We have $\mu^n_1=\tanh^{-1}(\nu^n_1)$.
			Since the points $(v_k, o, v_k^\prime, v_k^\star)$ are aligned in this order, the unit tangent vectors $e_i^\prime, e_j^\prime$ at $o$ point in the opposite direction to the vertices $v_i,v_j$, thus denoting their common angle $\theta^\prime=\theta_{ij}^\prime$ we have $\nu^n_1=\cos(\theta^\prime/2)$ hence $(\nu^n_1)^2=\tfrac{1}{2}(1+\cos \theta^\prime)$.
			To compute $\cos(\theta^\prime)= \langle e_i^\prime \mid e_j^\prime \rangle_o$ we square the norm of $0=(\sum_0^n e_k^\prime)$ to find $0=\sum_{i,j} \langle e_i^\prime \mid e_j^\prime \rangle_o = (n+1)+2\binom{n+1}{2} \cos \theta^\prime$ hence $\cos \theta^\prime=-1/n$.
			Finally $(\tanh \mu^n_1)^2=\tfrac{1}{2}(1-1/n)$ 
			% or equivalently \(\left(\sinh \mu^n_1\right)^2 = \tfrac{n-1}{n+1}\) 
			thus $\mu^n_1$ grows as $n\to \infty$ to $\mu^\infty_1 = \tanh^{-1}(1/\sqrt{2}) = \log(1+\sqrt{2})$.
		\end{proof}
		
		\begin{corollary}[Hausdorff distance convex hull to $1$-skeleton]
			\label{cor:Hausdist-polytope-1-skeleton}
			For any subset $X \subset \Hyp^n\cup \partial \Hyp^n$, every point of $\Conv(X)$ is at distance at most $\log(1+\sqrt{2})$ from $Y^{(1)}=\bigcup_{x_1,x_2 \in x}[x_1, x_2]$.
		\end{corollary}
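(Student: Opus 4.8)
The plan is to reduce the assertion about an arbitrary (possibly infinite) subset $X$ to the single-simplex estimate of \Cref{thm:Hausdist-simplex-skeleta}. Fix a point $p\in\Conv(X)$. If $p\in\partial\Hyp^n$ then, since the quadric $\Proj\mathbf{X}_0$ bounds a strictly convex body and so contains no nontrivial segment, $p$ must coincide with one of the boundary points of $X$, hence $p\in Y^{(1)}$ and the distance is $0$; so I may assume $p\in\Hyp^n$.

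First I would invoke Carathéodory's theorem. In the Cayley--Klein projective model the geodesic convex hull of points of $\Hyp^n\sqcup\partial\Hyp^n$ coincides with the (linear) convex hull of their projective representatives, so the classical Euclidean Carathéodory theorem transfers verbatim: the point $p$ lies in the convex hull of an affinely independent subset $\{x_0,\dots,x_k\}\subset X$ with $0\le k\le n$. These vertices span a $k$-dimensional geodesic subspace isometric to $\Hyp^k\sqcup\partial\Hyp^k$ and, being independent, form a \emph{total} simplex $\Delta$ inside it, with $p\in\Delta^{(k)}=\Conv(\Delta)$. If $k\le 1$ then $p\in Y^{(1)}$ and the distance is $0$, so I may assume $k\ge 2$.

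Next I would apply \Cref{thm:Hausdist-simplex-skeleta} inside this $\Hyp^k$. By the definition of $\delta^k_1$ as a maximum over $\Delta^{(k)}$, we have $d(p,\Delta^{(1)})\le \delta^k_1(\Delta)\le \mu^k_1$. The key point is that each edge $[x_i,x_j]$ of $\Delta$ joins two points of $X$, so $\Delta^{(1)}=\bigcup_{i<j}[x_i,x_j]\subset Y^{(1)}$, whence $d(p,Y^{(1)})\le d(p,\Delta^{(1)})\le \mu^k_1$. Finally I would use the closed form $(\tanh\mu^k_1)^2=\tfrac{k-1}{2k}<\tfrac12$: since $\tanh^{-1}$ is increasing and $\tfrac{k-1}{2k}$ increases to $\tfrac12$, the sequence $(\mu^k_1)_k$ increases to $\log(1+\sqrt2)$, so $\mu^k_1\le\log(1+\sqrt2)$ uniformly in $k\le n$. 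Combining the two bounds gives $d(p,Y^{(1)})\le\log(1+\sqrt2)$, as claimed.

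The only delicate point is the first step: one must be sure Carathéodory remains valid in the \emph{compactified} space, including when some $x_i\in\partial\Hyp^n$. This is justified by the convex-hull correspondence with the projective model, which holds throughout $\Hyp^n\sqcup\partial\Hyp^n$; a segment between two ideal points is then a complete geodesic and a segment from an ideal to an interior point is a ray, but in each case it is still contained in $Y^{(1)}$, so the inclusion $\Delta^{(1)}\subset Y^{(1)}$ used above is unaffected.
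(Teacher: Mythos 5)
Your proposal is correct and follows exactly the deduction the paper intends (the corollary is stated without proof as an immediate consequence of \Cref{thm:Hausdist-simplex-skeleta}): Carath\'eodory in the Cayley--Klein model reduces to a total $k$-simplex with vertices in $X$, whose $1$-skeleton lies in $Y^{(1)}$, and then $d(p,Y^{(1)})\le\mu^k_1\le\log(1+\sqrt2)$ since $(\tanh\mu^k_1)^2=\tfrac{k-1}{2k}$ increases to $\tfrac12$. Your care with the degenerate cases ($p$ ideal, $k\le 1$) is a welcome addition rather than a deviation.
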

		
		\subsection{Locating and counting points in  \texorpdfstring{$\Delta$}{Delta} realising \texorpdfstring{$\delta^n_m(\Delta)$}{deltanm(Delta)}}
		
		Recall from \Cref{cor:incentred-Euclidean-model} the equivariant correspondence between total ideal simplices in $(\partial \Hyp^n)^{n+1}$ and total Euclidean simplices inscribed in $(\Sph^{n-1})^{n+1}$ whose incenter lies at the origin.
		
		The \cite[Theorem 3.2 (ii)]{Edmonds-Allan-Martini_Coincidence-simplices-centers_2005} shows that a total simplex $\Delta$ in $\R^n$ is equicentered (its incenter $o$ coincides with its circumcenter) if and only if it is equiradial (its hyperfaces all have the same circumradius).
		In that case, the incenter $o$ projects orthogonally on each face $\partial_k \Delta^{(n)}$ to its circumcenter $v_k^\prime$.
		However $v_k^\prime$ may be distinct from the incenter of $\partial_k \Delta^{(n)}$, in which case the distance from $o$ to the $(n-2)$ skeleton will not be the maximal distance $\delta^n_{n-2}$.
		Indeed, we propose the proof of following Proposition as an exercise for the reader.
		
		\begin{proposition}[when does $d(o,\Delta^{(m)})=\delta^n_m(\Delta)$ ?]
			\label{prop:miximizers-delta-n-m}
			Let $m,n\in \N$ with $0<m<n-1$.
			Consider a total ideal simplex $\Delta \in (\partial \Hyp^n)^{n+1}$.
			
			If its incenter $o$ realizes the maximal distance to the $m$-skeleton, namely $d(o,\Delta^{(m)})= \delta^n_m(\Delta)$, then $\Delta$ is regular; thus if $\Delta$ is not regular then there are at least $2$ maximizers of $\delta^n_m$.
		\end{proposition}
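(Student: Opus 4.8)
The plan is to work throughout in the incentred Euclidean model of \Cref{cor:incentred-Euclidean-model}, where the (total ideal) $\Delta$ becomes a Euclidean simplex inscribed in the unit sphere $\Sph^{n-1}$ whose incenter and circumcenter both coincide with the origin $o$. The first observation is that distances from $o$ are governed by circumradii: since $o$ is the common circumcenter, the orthogonal projection of $o$ onto the affine span of any $m$-face $F$ is the circumcenter $q_F$ of $F$, and Pythagoras in $\R^n$ gives $|q_F|^2 = 1 - R_F^2$, where $R_F$ is the circumradius of $F$. Hence $d(o,F)=\tanh^{-1}|q_F|$ is a strictly decreasing function of $R_F$, so the $m$-faces closest to $o$ are exactly those of maximal circumradius, and
\[
d(o, \Delta^{(m)}) = \tanh^{-1}\sqrt{1 - R_{\max}^2}, \qquad R_{\max} = \max\{R_F : F \text{ an } m\text{-face}\}.
\]

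Next I would record the factorization underlying \Cref{thm:Hausdist-simplex-skeleta}. For each facet $H_k=\partial_k\Delta^{(n)}$ the incenter projects orthogonally to $v_k'$ at the common distance $\rho=\delta^n_{n-1}(\Delta)$, and since $m<n-1$ every $m$-face lies in some facet, projecting $o$ first onto $H_k$ and then inside $H_k$ yields
\[
\cosh d(o, \Delta^{(m)}) = \cosh\rho \cdot \min_k \cosh d(v_k', H_k^{(m)}),
\]
with $d(v_k', H_k^{(m)}) \le \delta^{n-1}_m(H_k)$, equality holding precisely when $v_k'$ (which, $o$ being the circumcenter, is the circumcenter of $H_k$) realises the maximal distance to the $m$-skeleton inside the facet $H_k$. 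Conversely, a point $q^\ast$ realising $\delta^{n-1}_m(H_k)$ inside a facet has its closest $m$-face inside that same facet, so, viewed in $\Delta^{(n)}$, it already gives the lower bound $\cosh\delta^n_m(\Delta) \ge \max_k \cosh\delta^{n-1}_m(H_k)$.

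The heart of the argument is a variational step feeding an induction on the codimension $n-m$. First I would extract the first-order consequence of maximality at $o$: the circumcenters $q_F$ of the maximal-circumradius $m$-faces all share the norm $\sqrt{1-R_{\max}^2}$, and if $o$ maximises $p\mapsto d(p,\Delta^{(m)})$ then one must have $0\in\Conv\{q_F\}$, for otherwise a small displacement of $o$ would increase its distance to every closest face at once while these remain the closest — forcing in particular at least two such faces. I would then argue that global maximality saturates the factorization above, pinning $v_k'$ (the circumcenter of a binding facet) to be the point of $H_k$ realising $\delta^{n-1}_m(H_k)$, and moreover forcing $v_k'$ to be the incenter of $H_k$; the case of codimension two is exactly the coincidence of the circumcenter and incenter of a facet, which forces that facet to be regular via the Edmonds–Martini result already invoked in \Cref{thm:ideal-simplices-max-inradius}, while higher codimension is handled by applying the Proposition inductively inside each binding facet. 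Equiradiality of the facets (again Edmonds–Martini) then propagates regularity from the binding facets to all of $\Delta$.

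The step I expect to be the main obstacle is upgrading the \emph{local} balance $0\in\Conv\{q_F\}$ to the \emph{global} conclusion, and in particular establishing that $v_k'$ is the incenter and not merely the circumcenter of the binding facet: the first-order condition is necessary but not sufficient (a disphenoid already exhibits a balanced antipodal pair of closest faces at which $o$ is only a saddle), so one must control the competition between $m$-faces lying in distinct facets and produce, whenever not all circumradii coincide, an admissible displacement — possibly felt only at second order — that strictly increases $d(\cdot,\Delta^{(m)})$ while the closest faces stay closest. Granting the main claim, the final assertion is immediate: if $\Delta$ is not regular then $o$ fails to maximise, so the nonempty, closed, $\operatorname{Stab}(\Delta^{(n)})$-invariant maximising locus avoids the unique incenter $o$; a single maximiser would be a stabiliser-fixed point distinct from $o$ subject to the same rigidity and hence again forcing regularity, a contradiction, so there must be at least two maximisers.
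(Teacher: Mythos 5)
First, a point of comparison: the paper does not prove this Proposition at all --- it is explicitly ``proposed as an exercise for the reader'', with the preceding paragraph supplying the intended ingredients (the Edmonds--Martini equivalence between equicentered and equiradial simplices, and the observation that $o$ projects onto each facet at its circumcenter, which need not be that facet's incenter). Your strategy follows those hints closely, so there is no divergence of approach to report; the issue is that what you have written is a plan with its hardest step left open, not a proof.

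The central gap is the one you flag yourself. Everything reduces to showing that if $o$ maximizes $d(\cdot,\Delta^{(m)})$ then each binding facet circumcenter $v_k'$ realizes $\delta^{n-1}_m(H_k)$ and is moreover the \emph{incenter} of $H_k$, after which induction on codimension and Edmonds--Martini would conclude. You correctly observe that the first-order condition $0\in\Conv\{q_F\}$ is necessary but not sufficient, and that one must exhibit an admissible (possibly second-order) displacement increasing $d(\cdot,\Delta^{(m)})$ whenever the closest $m$-faces are not suitably balanced --- but you never construct that displacement or any substitute for it, so the implication ``$o$ maximal $\Rightarrow$ $\Delta$ regular'' remains unproved. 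Two subsidiary claims are also asserted without justification: the identity $d(o,\Delta^{(m)})=\tanh^{-1}\sqrt{1-R_{\max}^{2}}$ presupposes that the circumcenter $q_F$ of a closest $m$-face lies inside $F$ (otherwise the Euclidean distance from $o$ to $F$ exceeds $\lvert q_F\rvert$ and the closest faces need not be those of maximal circumradius), and the lower bound $\cosh\delta^{n}_{m}(\Delta)\ge\max_k\cosh\delta^{n-1}_{m}(H_k)$ rests on the unproved claim that a maximizer of $d(\cdot,H_k^{(m)})$ inside $H_k$ is no closer to the $m$-faces through the opposite vertex $v_k$ than to $H_k^{(m)}$. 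Finally, the closing deduction is broken: from a hypothetical unique maximizer $p^*\ne o$ you conclude that $p^*$ is fixed by $\operatorname{Stab}(\Delta^{(n)})$ and ``subject to the same rigidity'', but the rigidity in question is a property of the incenter specifically, and for non-regular $\Delta$ the stabilizer may be trivial, so this yields nothing; note also that the paper's own disphenoid example records a count $(4,1)$ of local versus global maximizers of $\delta^3_1$ for a generic (hence non-regular) disphenoid, so you should pin down whether ``at least $2$ maximizers'' is to be read locally or globally before attempting this step.
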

		\begin{problem}[maximizers of $\delta^n_m$]
			\label{prob:maximizers_delta-n-m}
			Each $(n+1)$-set of $m$-faces $\{F_{0}^{(m)}, \dots, F_n^{(m)}\} \in \left(\genfrac{}{}{0pt}{}{\binom{\Delta}{1+m}}{1+n}\right)$
			defines a decreasing intersection of convex sets in $\Delta^{(n)}$ by $C_t(F)=\{p\in \Delta^{(n)} \colon d(p,F_i^{(m)}\ge t\}$, hence a point maximizing the distance to these faces, that is a local maximizer of the Hausdorff distance from $\Delta^{(n)}$ to $\Delta^{(m)}$, which for some $F$ will be a global maximer of $\delta^n_m$.
			
			\emph{What can be the number of distinct local global minimizers and where can they be located?}
			
			Note that the isometry group of the simplex $\operatorname{Stab}(\Delta^{(n)})\subset \Isom(\Hyp^n)$ acts on that set of $(1+n)$-sets of $m$-faces.
			However the description and enumeration of distinct (local or global) maximizers is more subtle than the question of classifying (all or certain) of the orbits under this action, since it may happen that two $(1+n)$-sets of $m$-faces yield the same (local or global) maximizer without belonging to the same orbit.
		\end{problem}
		
		\begin{example}[disphenoids]
			\label{eg:disphenoids}
			The tetrahedra in $\R^3$ that are equicentred or equiradial are called \emph{disphenoids}, and have various equivalent characterisations (see \cite{Lemoine_tetraedres-equifacetaux_1880}), such as having opposite edges of the same length and acute triangle faces.
			This shows that acute triangles parametrize disphenoids by folding along their midlines (joining the mid-edges) as in Figure \ref{fig:disphenoid}, and this parametrization is equivariant under the actions of the groups of Euclidean similitudes.
			\begin{figure}[h]
				\centering
				\includegraphics[width=0.324\linewidth]{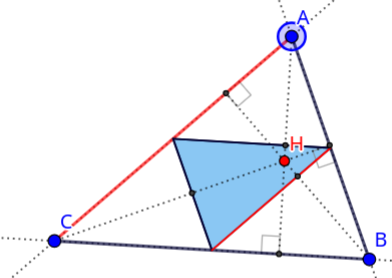}
				\includegraphics[width=0.324\linewidth]{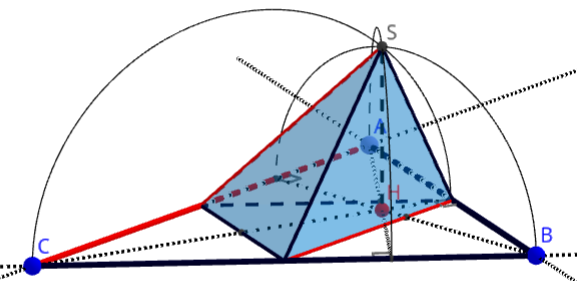}
				\includestandalone{images/tikz/intersection}
				\caption{From acute triangles to disphenoids. Maximizers of $\delta^3_1$. Moduli space of disphenoids.}
				\label{fig:disphenoid}
			\end{figure}
			
			Identifying the Euclidean plane $\R^2$ with the complex line $\C$, every acute triangle has a unique representative modulo the action of the group of similitudes $\C\rtimes \C^\times = \R^2 \rtimes (\operatorname{SO}(2)\times \R_{>0}^*)$, of the form $(1,z,-1)$ where $z\in \C$ has $\Im(z)>0$ and $-1<\Re(z)<1$ and $\lvert z \rvert >1$.
			(One may compare this parametrization with the cross-ratio of the four vertices in its circumscribed sphere.)
			The (pairs of opposite) edges of the associated folded tetrahedron have lengths $\lvert z+1\rvert, 2, \lvert z-1\vert$.
			The number of (local, global) maximizers of $\delta^3_1$ is $(1,1)$ when $\Im(z)=0$ and $\lvert z-1\rvert = 1$ namely the faces are equilateral, or $(4,4)$ when $\lvert z \rvert >1$ but $\Re(z)=0$ namely the faces are strictly isoceles, and $(4,1)$ when the faces are generic triangles.
			\begin{figure}[h]
				\centering
				\includegraphics[width=0.31\linewidth]{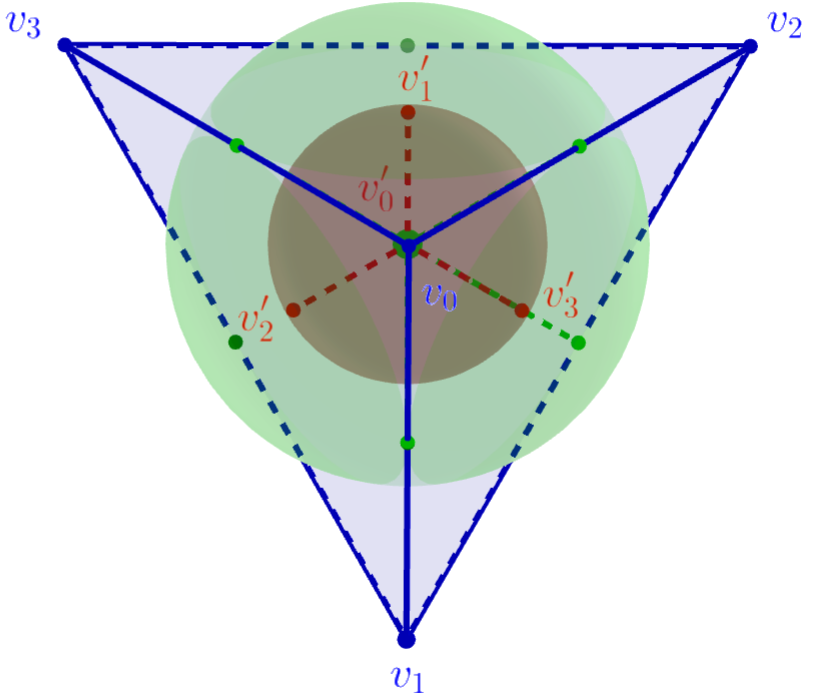}
				\includegraphics[width=0.31\linewidth]{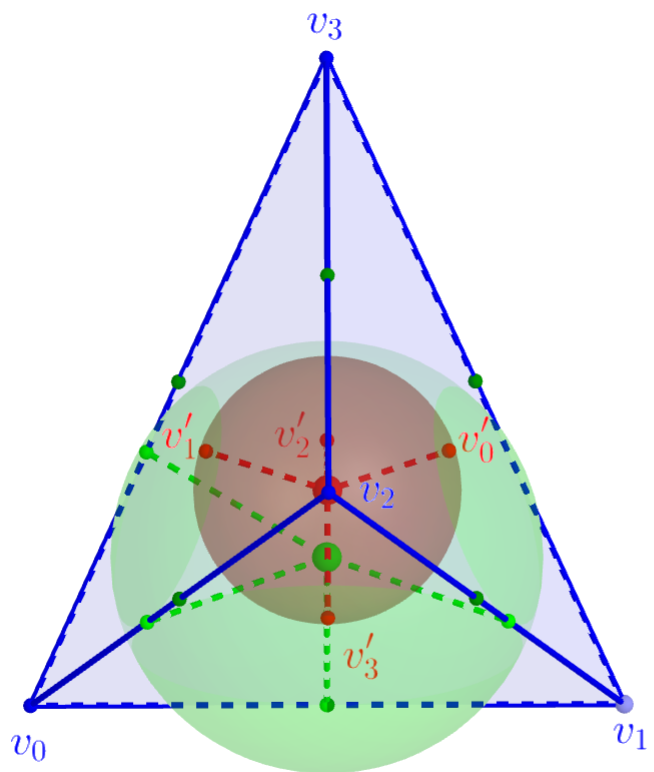}
				\includegraphics[width=0.324\linewidth]{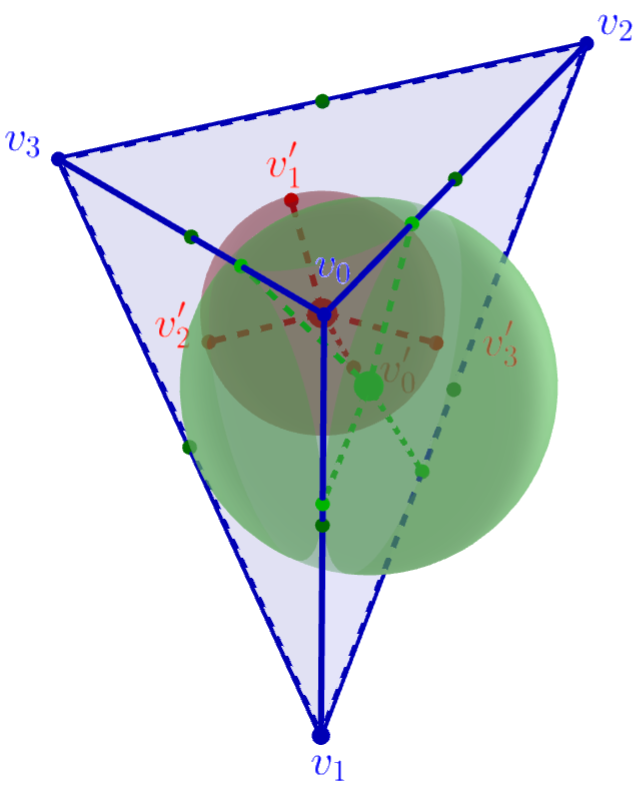}
				\caption{Maximizers of $\delta^3_1$ in the three cases.}
				\label{fig:maximizer-delta-31}
			\end{figure}
		\end{example}
		
		\section*{Acknowledgements}
		
		We express our thanks to Anton Petrunin for advising the use homogneous coordinates in the proof of \Cref{thm:ideal-simplices-max-inradius}.
		
		\bibliographystyle{alpha}
		\bibliography{biblio.bib}
		
	\end{document}